\numberwithin{equation}{section}
\theoremstyle{plain}
\newtheorem{conjecture}{Conjecture}
\newtheorem{theorem}{Theorem}
\theoremstyle{plain}
\newtheorem{lemma}{Lemma}
\newtheorem{corollary}{Corollary}
\theoremstyle{definition}
\newtheorem{proof}{Proof}
\newtheorem{definition}{Definition}
\newcommand{\mlegendre}[2]{\left(\frac{#1}{#2}\right)}
\begin{document}

\title{Long nonnegative sums of Legendre symbols}
\author{A.\,B.~Kalmynin}
\address{Steklov Mathematical Institute of Russian Academy of Sciences, Gubkina, 8, Moscow, Russia}
\email{alkalb1995cd@mail.ru}
\date{}
\udk{}
\maketitle
\footnotetext{The work is supported by the Russian Science Foundation under grant \textnumero 19-11-0001.}
\begin{abstract}\begin{bf}{Abstract.}\end{bf}
For $0\leq \alpha<1$ and prime number $p$, let $L(\alpha,p)$ be the sum of the first $[\alpha p]$ values of Legendre symbol modulo $p$. We study positivity of $L(\alpha,p)$ and prove that for $|\alpha-\frac13|<2\cdot 10^{-6}$ and for rational $\alpha\leq \frac12$ with denominators in the set $\{1,2,3,4,5,6,8,12\}$ the inequality $L(\alpha,p)\geq 0$ holds for majority of primes.     
\end{abstract}
\section{Introduction} Let $p$ be an odd prime number and $\chi_p(n)=\mlegendre{n}{p}$ be the Legendre symbol modulo $p$. It is well-known that the sum

\[
\sum_{n\leq p/2}\mlegendre{n}{p}
\]

is always nonnegative. In other words, there are at least as many quadratic residues as nonresidues modulo $p$ below $p/2$. More precisely, Dirichlet proved the following formula for this sum

\begin{theorem}
Let $p$ be an odd prime number. Then the equality

\[
\sum_{n\leq p/2}\mlegendre{n}{p}=\begin{cases}
\left(2-\mlegendre{2}{p}\right)h(-p)\text{ if }p\equiv 3 \pmod 4\\
0 \text{ if }p\equiv 1 \pmod 4
\end{cases}
\]

holds, where $h(-p)$ is the class number of the number field $\mathbb Q(\sqrt{-p}).$
\end{theorem}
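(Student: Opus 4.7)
My plan is to split according to the residue of $p$ modulo $4$. When $p\equiv 1\pmod 4$ the character $\chi_p$ is even: $\chi_p(p-n)=\chi_p(-n)=\chi_p(n)$. The sums $\sum_{n=1}^{(p-1)/2}\chi_p(n)$ and $\sum_{n=(p+1)/2}^{p-1}\chi_p(n)$ are therefore equal, and since they add up to $\sum_{n=1}^{p-1}\chi_p(n)=0$, each of them vanishes. This settles the first case with no further work.

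For $p\equiv 3\pmod 4$ the character is odd, and the principal input is the analytic class number formula for imaginary quadratic fields. In the form most convenient here it says that for $p>3$
\[
h(-p)=-\frac{1}{p}\sum_{a=1}^{p-1}a\chi_p(a),
\]
obtained by combining $h(-p)=\frac{\sqrt{p}}{\pi}L(1,\chi_p)$ with the standard Hurwitz-type evaluation of $L(1,\chi)$ for an odd primitive character. Writing $T=\sum_{a=1}^{p-1}a\chi_p(a)$ and $S=\sum_{n\leq p/2}\chi_p(n)$, so that $T=-p\,h(-p)$, the task reduces to proving $pS=-(2-\chi_p(2))T$.

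To produce this identity I would perform the substitution $a\mapsto b=2a\bmod p$ inside $T$. Multiplication by $2$ permutes $\{1,\ldots,p-1\}$, and the representative $b$ equals $2a$ when $a\leq(p-1)/2$ but $2a-p$ when $a\geq(p+1)/2$. Using $\chi_p(b)=\chi_p(2)\chi_p(a)$ and rewriting $T=\sum_b b\,\chi_p(b)$ this way gives
\[
T=\chi_p(2)\Bigl[\,2T-p\!\!\sum_{a>p/2}\!\!\chi_p(a)\Bigr].
\]
The oddness of $\chi_p$ combined with $\sum_{a=1}^{p-1}\chi_p(a)=0$ yields $\sum_{a>p/2}\chi_p(a)=-S$, so $T(1-2\chi_p(2))=\chi_p(2)\,pS$. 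Substituting $T=-p\,h(-p)$ and using $\chi_p(2)^{-1}=\chi_p(2)$ simplifies the result in one line to $S=(2-\chi_p(2))h(-p)$.

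The main obstacle is purely expository, namely quoting the class number formula in the convenient moment shape above; once that is granted, the identification of $S$ is a short piece of $\chi_p(2)$-bookkeeping. The isolated case $p=3$ falls outside this framework because $\mathbb Q(\sqrt{-3})$ has extra units, and would be treated by direct computation.
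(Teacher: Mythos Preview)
The paper does not actually prove this theorem: it is quoted in the introduction as a classical result of Dirichlet and serves purely as motivation for the questions studied afterwards. There is therefore no ``paper's own proof'' to compare against.

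Your argument is correct and is essentially the standard textbook derivation. The $p\equiv 1\pmod 4$ case is immediate from the evenness of $\chi_p$, as you say. For $p\equiv 3\pmod 4$ you take as input the moment form of Dirichlet's class number formula
\[
h(-p)=-\frac{1}{p}\sum_{a=1}^{p-1}a\,\chi_p(a)\qquad(p>3),
\]
and the substitution $a\mapsto 2a\bmod p$ yields $T(1-2\chi_p(2))=\chi_p(2)\,pS$; multiplying through by $\chi_p(2)$ and inserting $T=-p\,h(-p)$ gives $S=(2-\chi_p(2))h(-p)$, exactly as you wrote. You are also right that $p=3$ must be excluded from this derivation because $\mathbb{Q}(\sqrt{-3})$ has six units rather than two, which changes the constant in the class number formula. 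In fact the identity as stated in the paper fails for $p=3$: one has $S=1$ while $(2-\chi_3(2))h(-3)=3$, so the theorem should really be read with the tacit restriction $p>3$.
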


Other cases of connection between character sums of this type and class numbers of quadratic fields are given in \cite{suw}. For example, the sum still will be nonnegative if we replace $p/2$ by $p/3$ or $p/4$. This leads us to the general question about nonnegativity of the sum of length $\alpha p$ for any real number $\alpha$. Let us define
\[
L(\alpha,p)=\sum_{n\leq \alpha p}\mlegendre{n}{p}.
\]
Numerical evidence suggests that for any $\alpha\leq \frac{1}{2}$ most primes satisfy the inequality

\begin{equation}
\label{main}    
L(\alpha,p)\geq 0.
\end{equation}
For example, among first $1000$ primes there are $896$ with $L(\frac25,p)\geq 0$, $917$ with $L(\frac38,p)\geq 0$, $884$ with $L(\frac{1}{12},p)\geq 0$, $812$ with $L(\frac{1}{2\pi},p)\geq 0$ and $937$ with $L(\frac{1}{e},p)\geq 0$. For $10000$ these numbers are $8915, 9122, 8799, 8019$ and $9340$, respectively, and we get $89041, 91036, 87868, 79784$ and $93260$ for the first $100000$ prime numbers. As we can see, for all our choices of parameter $\alpha$ the proportion of $p$ with nonnegative $L(\alpha,p)$ seems to be even more than $75\%$. Based on that, let us formulate our main conjecture:
  
\begin{conjecture}
For all $0\leq \alpha\leq \frac{1}{2}$ the lower asymptotic density of primes that satisfy \ref{main} is at least $\frac{1}{2}$. In other words, the inequality \ref{main} holds for majority of prime numbers:
\begin{equation}
\label{goal}
\liminf_{x\to +\infty}\frac{1}{\pi(x)}\#\left\{p\leq x: \sum_{n\leq \alpha p}\mlegendre{n}{p}\geq 0\right\}\geq \frac{1}{2}.
\end{equation}

\end{conjecture}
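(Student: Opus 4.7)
My plan is to pass to a limiting distribution as $p$ varies over primes, splitting according to the residue of $p$ modulo $4$. Applying a truncated P\'olya--Vinogradov expansion, using $\bar\chi_p=\chi_p$ and pairing terms $n$ and $-n$, gives
\[
L(\alpha,p) = \frac{\sqrt{p}}{\pi}\sum_{1\leq n\leq N}\frac{c_n(\alpha)}{n}\chi_p(n) + O\!\left(\frac{p\log p}{N}\right),
\]
where $c_n(\alpha)=1-\cos 2\pi n\alpha$ if $p\equiv 3\pmod 4$ and $c_n(\alpha)=\sin 2\pi n\alpha$ if $p\equiv 1\pmod 4$. Quadratic reciprocity combined with Dirichlet's theorem on primes in arithmetic progressions then implies that for any finite set of primes $q_1,\ldots,q_k$, the tuple $(\chi_p(q_1),\ldots,\chi_p(q_k))$ equidistributes on $\{\pm 1\}^k$ within each class modulo~$4$. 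A standard moment estimate on the tail $n>N$ promotes this to convergence in distribution: $L(\alpha,p)/\sqrt p$ tends, conditional on $p\bmod 4$, to a random variable $Y_\alpha$ obtained by evaluating the same Fourier series on the random completely multiplicative function $X_n=\prod_q \varepsilon_q^{e_q}$, with $\varepsilon_q$ i.i.d.\ Rademacher signs indexed by primes $q$.

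The conjecture then reduces to proving $\mathbb{P}(Y_\alpha\geq 0)\geq \tfrac12$ separately in each residue class modulo~$4$. For $p\equiv 3\pmod 4$ the coefficients $c_n(\alpha)$ are nonnegative, and the perfect-square indices $n=m^2$ contribute the deterministic positive quantity
\[
\frac{1}{\pi}\sum_{m\geq 1}\frac{1-\cos 2\pi m^2\alpha}{m^2},
\]
which shifts the random part of $Y_\alpha$ upward and should yield the desired inequality (indeed, a stronger one, consistent with the $\geq 75\%$ rates observed numerically). For $p\equiv 1\pmod 4$ no such shift exists---the squares contribute $\sin 2\pi m^2\alpha$, of indefinite sign---so one must instead extract a genuine structural symmetry. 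The most natural route is a deformation argument anchored at $\alpha=\tfrac12$, where every sine vanishes and $L(\tfrac12,p)=0$ identically by Theorem~1, then propagating the inequality down to smaller $\alpha$ while controlling the behaviour of the median near zero.

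The main obstacle is exactly this final distributional inequality for generic $\alpha$. Even granting that $Y_\alpha$ is absolutely continuous with mean zero in the $p\equiv 1\pmod 4$ class, excluding a slight skewness that tips the median below zero seems delicate: the coefficients $\sin 2\pi n\alpha$ depend continuously on $\alpha$, whereas medians of random multiplicative series can behave erratically under such perturbations. The specific denominators $\{1,2,3,4,5,6,8,12\}$ treated in the paper are presumably exactly those for which the nonzero $c_n(\alpha)$ are supported on a sparse set of residues of $n$, collapsing $Y_\alpha$ to a finite combination of Dirichlet $L$-values at $s=1$ whose positivity can be checked directly; likewise the narrow window $|\alpha-\tfrac13|<2\cdot 10^{-6}$ is presumably handled by continuity from the rational point $\alpha=\tfrac13$. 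A proof of the Conjecture valid uniformly in $\alpha\in[0,\tfrac12]$ would seem to require a genuinely new input that bypasses this case-by-case analysis of explicit coefficients.
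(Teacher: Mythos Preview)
The statement you are attempting is Conjecture~1, which the paper does \emph{not} prove; it remains open, and the paper only establishes the partial results of Theorems~2 and~3. So there is no ``paper's own proof'' to compare against, and your final paragraph is right: a uniform proof would require a new idea.

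That said, your outline reproduces the paper's reduction almost exactly. The truncated Fourier expansion, the split according to $p\bmod 4$ with coefficients $\sin 2\pi n\alpha$ and $1-\cos 2\pi n\alpha$, the passage to the random completely multiplicative model $X_n$, and the reduction of the conjecture to $\mathbb{P}(L(a^{+}(\alpha))>0)+\mathbb{P}(L(a^{-}(\alpha))>0)\geq 1$ are precisely the content of the paper's Theorem~4, Theorem~5, and Corollary~3. Your guesses about how the special cases are handled are also accurate: the denominators $\{1,2,3,4,5,6,8,12\}$ are treated by expressing $a_n^{\pm}(\alpha)$ in terms of real Dirichlet characters and exploiting Euler products (with $5$ requiring one complex character and a Chebyshev bound on the argument), and the interval $|\alpha-\tfrac13|<2\cdot 10^{-6}$ is handled by an $L^2$-continuity estimate in $\alpha$ combined with a sub-Gaussian tail bound at $\alpha=\tfrac13$.

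Two substantive gaps in your sketch, beyond the acknowledged one. First, for $p\equiv 3\pmod 4$ you argue that the positive mean $\pi^{-1}\sum_{m}(1-\cos 2\pi m^2\alpha)/m^2$ ``should yield'' $\mathbb{P}(Y_\alpha\geq 0)\geq \tfrac12$. A positive mean does not imply a nonnegative median, and the paper never uses this; for each $\alpha$ it treats, it instead proves either $L(a^{-})\geq 0$ almost surely via an Euler product, or invokes the involution $X_n\mapsto \lambda(n)X_n$ (which sends each Euler factor to a positive multiple of its reciprocal) to pair outcomes and force $\mathbb{P}\geq\tfrac12$. In one case ($\alpha=\tfrac1{12}$) it only extracts $c^{-}>0$ from the positive mean, compensating with $c^{+}=1$. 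Second, your parenthetical that $Y_\alpha$ has mean zero in the $p\equiv 1\pmod 4$ class is a slip: the mean is $\pi^{-1}\sum_{m}\sin(2\pi m^2\alpha)/m^2$, which is generically nonzero (and the paper uses its positivity at $\alpha=\tfrac25$).
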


We were able to prove Conjecture 1 for rational $\alpha$ with small denominators and also for all real $\alpha$ inside a small neighbourhood of the point $\frac{1}{3}$. So, our two main results are as follows:

\begin{theorem}
Conjecture 1 holds for all rational $\alpha\leq \frac{1}{2}$ with denominators in the set $\{1,2,3,4,5,6,8,12\}$.
\end{theorem}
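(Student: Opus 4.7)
My plan is to prove Theorem~2 case by case on the denominator $q\in\{1,2,3,4,5,6,8,12\}$, reducing each $L(a/q,p)$ to an explicit closed-form combination of class numbers of imaginary quadratic fields, and then checking positivity across enough residue classes of $p$ modulo $8q$ to secure density at least $1/2$.

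The starting point is the Polya--Vinogradov expansion in exact form: for $p\equiv 3\pmod 4$,
\[
L(\alpha,p)=\frac{\sqrt p}{\pi}\sum_{h=1}^{\infty}\frac{\chi_p(h)(1-\cos 2\pi h\alpha)}{h},
\]
with a sine analogue for $p\equiv 1\pmod 4$. For $\alpha=a/q$, the periodic factor depends only on $h\bmod q$, so it decomposes as a finite linear combination of real Dirichlet characters $\psi$ modulo divisors of $q$. Substituting turns the expansion into a finite sum of values $L(1,\chi_p\psi)$, and the Dirichlet class number formula $L(1,\chi_{-N})=\pi h(-N)/\sqrt N$ converts each such value into a class number. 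For $p\equiv 3\pmod 4$ one therefore obtains an identity of the form
\[
L(a/q,p)=\sum_{d\mid 4q} c_d(p)\,h(-dp),
\]
where $c_d(p)$ is a polynomial in Legendre symbols $\chi_p(\ell)$ for primes $\ell\mid 2q$ and hence depends only on $p\bmod 8q$.

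For the easy denominators $q\in\{1,2,3,4,6\}$, only $h(-p)$ appears and its coefficient is manifestly nonnegative: direct computation yields, for instance,
\[
L(1/3,p)=\tfrac12(3-\chi_p(3))h(-p),\qquad L(1/4,p)=\tfrac12(2-\chi_p(2))(1+\chi_p(2))h(-p)
\]
for $p\equiv 3\pmod 4$, so $L(\alpha,p)\ge 0$ for every such prime and density~$\ge 1/2$ follows at once. For the hard denominators $q\in\{5,8,12\}$, additional class numbers $h(-dp)$ with $d>1$ enter with possibly negative coefficients; e.g.\
\[
L(1/5,p)=\tfrac14\bigl((5-\chi_p(5))h(-p)-h(-5p)\bigr)\quad(p\equiv 3\pmod 4),
\]
which is already negative at $p=19$. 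Here I would complement the $p\equiv 3\pmod 4$ analysis with the parallel identity for $p\equiv 1\pmod 4$ from the sine Polya expansion (this involves $L$-values of order-$4$ characters mod $q$, which can be rewritten via class numbers of biquadratic CM-fields $\mathbb Q(\sqrt{-q},\sqrt p)$), then use the involution $L(\alpha,p)=-L(1-\alpha,p)$ for $p\equiv 1\pmod 4$ to pair up residue classes, and verify by finite computation that the union of residue classes mod $8q$ on which $L(a/q,p)\ge 0$ can be proved has density $\ge 1/2$ among all primes via Dirichlet's theorem on arithmetic progressions.

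The main obstacle will be the hard-denominator step: one cannot simply invoke positivity of a single class number, since the formula couples $h(-p)$ and $h(-dp)$ (and for $p\equiv 1\pmod 4$ even biquadratic class numbers) with mixed signs, and the nonnegativity of the combined expression is not a priori clear on any individual residue class. One therefore needs either a structural relation between the involved class numbers (e.g.\ from genus theory of $\mathbb Q(\sqrt{-p},\sqrt d)$) or a careful bookkeeping of the residue classes that balance out to density $\ge 1/2$. The most elaborate case is likely $\alpha=1/12$ or $5/12$, where three distinct discriminants enter simultaneously and two independent Legendre-symbol conditions must be tracked.
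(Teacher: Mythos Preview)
Your approach via class-number formulas is genuinely different from the paper's and works cleanly for the easy denominators $q\in\{1,2,3,4,6\}$: there the coefficients in front of the class numbers are all nonnegative on every residue class, so $L(a/q,p)\ge 0$ for all $p\equiv 3\pmod 4$ and density $\ge\tfrac12$ is immediate. (One small correction: for $q\in\{8,12\}$ every Dirichlet character mod $q$ is real, so the $p\equiv 1\pmod 4$ side does not actually bring in order-$4$ characters or biquadratic class numbers; that complication is specific to $q=5$.)

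The plan has a real gap at the hard denominators. Once the formula has mixed signs, as in your
\[
L(1/5,p)=\tfrac14\bigl((5-\chi_p(5))\,h(-p)-h(-5p)\bigr),
\]
the sign of $L(a/q,p)$ is \emph{not} determined by $p\bmod 8q$: it depends on the ratio $h(-5p)/h(-p)$, which varies freely within every fixed residue class. Genus theory controls only the $2$-part of these class groups, not their relative sizes, so it yields no inequality between $h(-p)$ and $h(-dp)$. And ``careful bookkeeping of residue classes'' cannot balance out to density $\ge\tfrac12$, because on each class the proportion of $p$ with $L(a/q,p)\ge 0$ is itself an unknown density in $(0,1)$, not a $0/1$ value you can tabulate. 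The same obstruction hits $q=8$ (for $p\equiv 3\pmod 4$, $L(1/8,p)$ mixes $h(-p)$ and $h(-2p)$ with opposite signs) and $\alpha=5/12$ on the $p\equiv 1\pmod 4$ side. For $q=5$, $p\equiv 1\pmod 4$, you would moreover need the \emph{argument} of $L(1,\chi_p\kappa)$ for the quartic character $\kappa$, and class numbers of CM-fields encode its absolute value, not its argument.

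What the paper does instead is abandon class numbers and argue distributionally. It shows (Theorem~5, Corollary~3) that $\frac{\pi}{\sqrt p}L(\alpha,p)$, over random $p\equiv\varepsilon\pmod 4$, converges in law to $L(a^{\pm}(\alpha))=\sum_n a_n^{\pm}(\alpha)X_n/n$ with $X_n$ a random completely multiplicative $\pm1$-function. The hard cases are then dispatched by symmetries of this random model that have no class-number analogue: for $q=8$ the measure-preserving involution $X_n\mapsto\lambda(n)X_n$ sends the two relevant Euler products $F,G$ to $c/F,\,c/G$, so the event $F\ge\tfrac{\sqrt2}{2}G$ and its image together cover the whole space, giving $c^{-}(1/8)\ge\tfrac12$; for $q=5$ the paper writes $\sum_n\kappa(n)X_n/n=Me^{i\xi}$ with $\xi=\sum_{p\equiv\pm2\,(5)}\pm X_p\arctan(1/p)$ and bounds $\mathbb P(|\xi|>\pi/2-\varphi)$ by Chebyshev. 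These are probability-$\ge\tfrac12$ statements over the ensemble of primes, not residue-class statements, and your proposal does not supply a substitute for them.
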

\begin{theorem}
Conjecture 1 holds for all real $\alpha$ satisfying $\frac{1}{3}-2\cdot10^{-6}\leq \alpha\leq \frac{1}{3}+2\cdot10^{-6}$.
\end{theorem}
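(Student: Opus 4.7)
The strategy is to expand $L(\alpha, p)$ around the special value $\alpha = 1/3$, at which the partial character sum admits a closed-form expression in terms of class numbers, and to control the resulting fluctuation via a second-moment estimate over primes.

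The first step is to establish, for every prime $p > 3$, the Dirichlet-type identity
\[
L(1/3, p) =
\begin{cases}
\tfrac{1}{2}\,(3 - \chi_p(3))\,h(-p), & p \equiv 3 \pmod 4, \\
\tfrac{1}{2}\,h(-3p), & p \equiv 1 \pmod 4.
\end{cases}
\]
Both formulas follow by expanding $L(N/p, p)$ with $N = [p/3]$ via Gauss sums: the factor $1 - \cos(2\pi k/3)$ vanishes precisely on multiples of $3$, while $\sin(2\pi k/3) = \tfrac{\sqrt{3}}{2}\chi_3(k)$, so the Fourier sum reduces to a multiple of $L(1, \chi_p)$ in the first case and of $L(1, \chi_p \chi_3) = L(1, \chi_{-3p})$ in the second, and the Dirichlet class number formula for $\mathbb{Q}(\sqrt{-p})$ (resp.\ $\mathbb{Q}(\sqrt{-3p})$) yields the stated expression. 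In both cases $L(1/3, p) \geq 1$, using genus theory to ensure $2 \mid h(-3p)$ when $p \equiv 1 \pmod 4$.

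For $\alpha = 1/3 + \delta$ with $|\delta| \leq \delta_0 := 2 \cdot 10^{-6}$, I would decompose
\[
L(\alpha, p) = L(1/3, p) + R(\delta, p), \qquad R(\delta, p) = \operatorname{sgn}(\delta) \sum_{n \in J_\delta(p)} \chi_p(n),
\]
where $J_\delta(p)$ is an interval of length $|\delta|p + O(1)$ adjacent to $p/3$. A prime $p$ is \emph{bad} (i.e., $L(\alpha, p) < 0$) only when $|R(\delta, p)| \geq L(1/3, p)$. Splitting by the magnitude of $L(1/3, p)$,
\[
\{\text{bad } p \leq x\} \subseteq \{p : L(1/3, p) < T\sqrt{p}\} \cup \{p : |R(\delta, p)| \geq T\sqrt{p}\}.
\]
The first set has density $\ll T$ by classical quantitative results on the distribution of $L(1, \chi_p)$ (equivalently, on small values of $h(-p)$ and $h(-3p)$). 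The second is handled by the second-moment bound $\sum_{p \leq x} R(\delta, p)^2 \ll \delta\, x\, \pi(x)$, obtained by expanding the square: diagonal pairs $n_1 = n_2$ contribute $|J_\delta(p)| \sim \delta p$ per prime, while the off-diagonal contribution is estimated via the prime number theorem in arithmetic progressions applied to $\bigl(\tfrac{n_1 n_2}{p}\bigr)$. Chebyshev then bounds the second set by a density of $\ll \delta/T^2$.

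Balancing with $T \sim \delta^{1/3}$ yields a total bad-prime density of $O(\delta^{1/3})$, which for $\delta = 2 \cdot 10^{-6}$ is of order $10^{-2}$, well below $\tfrac{1}{2}$. The \textbf{main obstacle} is executing both density estimates with effective constants small enough to accommodate $\delta = 2 \cdot 10^{-6}$: the off-diagonal part of the second-moment bound requires careful separation of generic pairs from rare pairs $(n_1, n_2)$ with $n_1 n_2$ a perfect square (where PNT-cancellation fails), and the explicit constant in the theorem statement reflects the concrete optimization against all implicit constants in this balance.
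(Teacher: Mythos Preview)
Your overall strategy --- anchor at $\alpha=1/3$ where $L(1/3,p)$ is an explicit positive multiple of a class number, then control the fluctuation $R(\delta,p)=L(\alpha,p)-L(1/3,p)$ in mean square --- is reasonable and is, in spirit, what the paper does. But the implementation you sketch has a genuine gap in the off-diagonal step, and the paper follows a rather different route to make all constants explicit.

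\medskip

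\textbf{The off-diagonal gap.} After swapping, for a fixed pair $(n_1,n_2)$ with $n_1n_2$ not a square you need cancellation in $\sum_{p}\bigl(\tfrac{n_1n_2}{p}\bigr)$, where the $p$-summation runs over primes in the short interval determined by $n_1,n_2\in J_\delta(p)$. This interval has length $\asymp \delta n_1$, while the conductor of $\bigl(\tfrac{n_1n_2}{\,\cdot\,}\bigr)$ as a character in $p$ is $\asymp n_1n_2\asymp n_1^2$. The prime number theorem in arithmetic progressions (Siegel--Walfisz) requires conductor $\le(\log x)^A$, which is wildly violated here since $n_1$ ranges up to $x/3$. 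Even under GRH the inner sum is only $O(n_1^{1/2}\log^2 n_1)$, and summing over the $\asymp\delta n_1$ admissible $n_2$ and then over $n_1\le x/3$ gives an off-diagonal of size $\delta x^{5/2+o(1)}$, which dominates the diagonal $\asymp\delta x^2/\log x$ as $x\to\infty$. So the second-moment bound $\sum_{p\le x}R(\delta,p)^2\ll \delta x\pi(x)$ cannot be obtained this way.

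\medskip

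\textbf{How the paper proceeds instead.} The paper first proves a general reduction (its Theorem~5): for coefficient sequences $a_n$ of the relevant type, the normalized sum $\tfrac{\pi}{\sqrt p}L(\alpha,p)$, viewed as a random variable in a uniformly chosen prime $p\le x$ in a fixed class mod $4$, converges in distribution to the random series $L(a^\pm(\alpha))=\sum_n a_n^\pm(\alpha)X_n/n$ built from a random completely multiplicative $X_n$. The ``off-diagonal'' work is absorbed into this reduction, which splits the $n$-sum into three ranges and uses Siegel--Walfisz only for $n\le\log^3 x$, P\'olya--Vinogradov (after extending the $p$-sum to all odd integers) for the middle range, and partial summation for the tail. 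Once in the random model, only pairs with $n_1n_2$ a perfect square survive, and the $L^2$-distance $\mathbb E\bigl(L(a^\pm(\alpha))-L(a^\pm(1/3))\bigr)^2$ is bounded by an explicit constant times $|\alpha-1/3|^{2/3}$ via an elementary divisor-sum argument. Your ``density $\ll T$ for small $L(1/3,p)$'' step is replaced by an equally elementary observation: $L(a^\pm(1/3))=c\,e^{\eta}$ with $\eta\in L(\sigma^2)$ for an explicit $\sigma^2<0.395$, so $\mathbb P(e^\eta\le u)\le\exp(-\tfrac{\ln^2 u}{2\sigma^2})$ by Hoeffding, and combining with Markov on the $L^2$-distance gives $c(\alpha)\ge 0.534$ for $|\alpha-1/3|\le 2\cdot10^{-6}$.

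\medskip

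In short: the decomposition you propose is the right picture, but both pillars --- the second moment of the short sum and the small-value density for $L(1,\chi_p)$ with usable explicit constants --- are substantially harder to execute directly than you indicate, and the paper bypasses both by passing to the random multiplicative model first.
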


To prove these two theorems, we are going to reduce the initial problem to the study of certain fixed random variable, using  distribution of primes in arithmetic progressions and various methods of Fourier analysis and probability theory.

\section{Fourier expansion\texorpdfstring{ of $L(\alpha,p)$}{}}

In this section we are going to prove the following simple result:

\begin{theorem}
For any $\alpha$ and all large enough prime numbers $p$ the equality

\begin{equation}
\label{four}
L(\alpha,p)=\tau\left(\mlegendre{\cdot}{p}\right)\sum_{m\in \mathbb Z, m\neq 0}\frac{1-e^{-2\pi i\alpha m}}{2\pi i m}\mlegendre{m}{p}
\end{equation}
holds, where $\tau\left(\mlegendre{\cdot}{p}\right)=\sum\limits_{n=0}^p e^{2\pi in/p}\mlegendre{n}{p}$ is quadratic Gauss sum. 
\end{theorem}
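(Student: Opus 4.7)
The plan is to expand the indicator function of the interval $[0,\alpha)$ in its Fourier series on $\mathbb R/\mathbb Z$, substitute into the definition of $L(\alpha,p)$, and use the classical Gauss sum identity to evaluate the resulting inner sum over $n$. For primes $p$ large enough that $\alpha p\notin\mathbb Z$ (automatic if $\alpha$ is irrational, and ensured by $p$ exceeding the denominator when $\alpha$ is rational), one rewrites
\[
L(\alpha,p)=\sum_{n=1}^{p-1}\mlegendre{n}{p}\,\mathbf 1_{[0,\alpha)}(n/p),
\]
and observes that the Fourier coefficients of this indicator are exactly $c_m=\frac{1-e^{-2\pi im\alpha}}{2\pi im}$ for $m\neq 0$ (with $c_0=\alpha$), i.e.\ the weights already appearing on the right-hand side of \eqref{four}.

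The second step invokes the classical identity
\[
\sum_{n=1}^{p-1}\mlegendre{n}{p}e^{2\pi imn/p}=\mlegendre{m}{p}\,\tau\mlegendre{\cdot}{p},
\]
valid for every $m\in\mathbb Z$: for $p\nmid m$ it is the substitution $n\mapsto m^{-1}n$, and for $p\mid m$ both sides vanish (since $\mlegendre{m}{p}=0$ and $\sum_n\mlegendre{n}{p}=0$). Substituting the Fourier series into $L(\alpha,p)$ and interchanging the finite sum over $n$ with the symmetric partial sum $\sum_{|m|\leq M}c_m e^{2\pi imn/p}$, this identity collapses the inner sum, and the $m=0$ contribution drops out because $\mlegendre{0}{p}=0$. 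Passing to the limit in $M$ produces \eqref{four}.

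The main obstacle is convergence, since the Fourier series of the indicator is only conditionally convergent. Two facts are required. First, pointwise convergence $\sum_{|m|\leq M}c_m e^{2\pi imn/p}\to\mathbf 1_{[0,\alpha)}(n/p)$ at each relevant point $n/p$, which follows from the classical pointwise convergence theorem for Fourier series of functions of bounded variation (each $n/p$ with $1\leq n\leq p-1$ is a continuity point of the indicator once $p$ is large). Second, the convergence of the symmetric principal value $\sum_{m\neq 0}c_m\mlegendre{m}{p}$ itself. Splitting $c_m=\frac{1}{2\pi im}-\frac{e^{-2\pi im\alpha}}{2\pi im}$ reduces this to convergence of $\sum_m\mlegendre{m}{p}/m$ (an $L(1,\chi_p)$-type series) and of $\sum_m\mlegendre{m}{p}e^{-2\pi im\alpha}/m$, both handled by Abel summation together with the standard boundedness of partial sums of the Legendre symbol (e.g.\ P\'olya--Vinogradov). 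Once these two points are secured, the interchange with the finite sum over $n$ is automatic and the formula \eqref{four} follows.
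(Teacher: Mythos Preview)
Your proposal is correct and follows essentially the same route as the paper: expand the indicator of $[0,\alpha]$ in its Fourier series, use pointwise convergence at the continuity points $n/p$ (the paper cites Dini's criterion, you cite the BV pointwise theorem), interchange with the finite sum over $n$, and apply the Gauss sum identity. Your second convergence point (establishing directly that $\sum_{m\neq 0}c_m\mlegendre{m}{p}$ converges via Abel summation and P\'olya--Vinogradov) is in fact unnecessary---once pointwise convergence at each $n/p$ is known, the interchange with the finite $n$-sum shows the partial sums in $M$ on both sides agree, so convergence of the right-hand side is automatic---but it does no harm.
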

\begin{proof}

Theorem is trivial for $\alpha\in \mathbb Z$, so it is enough to assume that $\alpha$ is not an integer. Also, $L(\alpha,p)$ is a periodic function of $\alpha$ with period 1 and so is the right-hand side of our formula. Thus, we can also assume that $0<\alpha<1.$ 
As periodic characteristic function $\chi_{[0,\alpha]}(\{x\})$ of the interval $[0,\alpha]$ is smooth everywhere except for the discontinuity points, by Dini's critertion for $x \not\equiv 0, \alpha \pmod 1$ we have

\[\chi_{[0,\alpha]}(\{x\})=\sum_{m \in \mathbb Z}\mathcal F\chi_{[0,\alpha]}(m)e^{2\pi i mx},
\]

where

\[
\mathcal F\chi_{[0,\alpha]}(m)=\int_0^\alpha e^{-2\pi i mx}dx=\begin{cases}
\alpha \text{ if } m=0\\
\frac{1-e^{-2\pi i \alpha m}}{2\pi i m} \text{ if } m\neq 0.
\end{cases}
\]

Therefore, due to the fact that $\mlegendre{0}{p}=0$ and $\alpha p$ is not an integer for large enough $p$, the equality

\[
L(\alpha,p)=\sum_{n=0}^p \chi_{[0,\alpha]}(n/p)\mlegendre{n}{p}=\sum_{n=0}^p \sum_{m\in \mathbb Z} \mathcal F\chi_{[0,\alpha]}(m)e^{2\pi nm/p}\mlegendre{n}{p}
\]

holds for $p$ large enough. Changing the order of summation and using the fact that the sum of Legendre symbols over a complete system of residues is equal to 0, we get

\[
L(\alpha,p)=\sum_{m\neq 0}\frac{1-e^{-2\pi i\alpha m}}{2\pi i m}\sum_{n=0}^p e^{2\pi i nm/p}\mlegendre{n}{p}.
\]

From multiplicativity of Legendre symbol, we easily obtain

\[
\sum_{n=0}^p e^{2\pi inm/p}\mlegendre{n}{p}=\tau\left(\mlegendre{\cdot}{p}\right)\mlegendre{m}{p},
\]

from which we get the desired result.
\end{proof}

\section{ Probabilistic reduction}

Here we show that, roughly speaking, in our formula for $L(\alpha,p)$ one can replace all the Legendre symbols by the random multiplicative function. It turns out that it is possible to reduce certain properties of linear combinations of Legendre symbols to properties of random completely multiplicative functions $f$ satisfying $f(n)=\pm 1$ for all $n \in \mathbb N$.

Let us give a few definitions. The main object of our study is the random prime number:

\begin{definition}
For $\varepsilon=\pm 1$ and $x\geq 5$ by $\mathfrak p_x^\varepsilon$ we denote the random variable which is uniformly distributed among primes $\leq x$ that are congruent to $\varepsilon$ modulo 4.
\end{definition}

Next, we need to define the random multiplicative function:

\begin{definition}
Let $X_2,X_3,X_5,X_7,X_{11}\ldots$ be a sequence of independent identically distributed random variables which are distributed according to the Rademacher distribution and indexed by prime numbers. In other words, for any prime $p$ we have
\[
\mathbb P(X_p=1)=\mathbb P(X_p=-1)=\frac12
\]

For any natural number $n$ we define $X_n$ by the formula

\[
X_n=\prod_{p} X_p^{\nu_p(n)},
\]

where $\nu_p(n)$ is the largest $k$ with $p^k \mid n$. Note that the product contains finite number of terms and also that $X_{ab}=X_aX_b$ for all $a$ and $b$.
\end{definition}

Using constructed random variables, we define certain random series.

\begin{definition}
Let $\{a_n\}$ be a sequence of complex numbers, $x\geq 5$ and $\varepsilon=\pm 1$. We define

\[
L(a,x,\varepsilon)=\sum_{n=1}^{+\infty}\frac{a_n}{n}\mlegendre{n}{\mathfrak p_x^\varepsilon}
\]

and

\[
L(a)=\sum_{n=1}^{+\infty}\frac{a_nX_n}{n}.
\]
\end{definition}

In this section we will show that $L(a)$ is often a rather good model for $L(a,x,\varepsilon)$. But first of all, we need to show that $L(a)$ is well-defined.

\begin{lemma}
If the sequence $\{a_n\}$ is bounded, then the series defining $L(a)$ converges almost surely.
\end{lemma}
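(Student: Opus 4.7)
The key observation is that $X_p^2 = 1$ forces $X_n$ to depend only on the squarefree part of $n$: writing $n = ks^2$ with $k$ squarefree, one has $X_n = X_k$. I would therefore regroup the formal series by squarefree part,
\[
L(a) = \sum_{k \text{ squarefree}} b_k X_k, \qquad b_k = \sum_{s=1}^{\infty} \frac{a_{ks^2}}{ks^2},
\]
where the inner sum converges absolutely thanks to the boundedness of $(a_n)$, giving $|b_k| \le A\pi^2/(6k)$ with $A = \sup_n |a_n|$. In particular $\sum_k b_k^2 < \infty$.

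A direct check shows $\{X_k : k \text{ squarefree}\}$ is an orthonormal system in $L^2(\mathbb{P})$: for distinct squarefree $k, \ell$ some prime divides exactly one of them, so the product $X_k X_\ell$ contains a factor $X_p$ raised to an odd power, which has mean zero independently of the remaining factors. To prove almost-sure convergence of $\sum_k b_k X_k$ I would run a martingale argument along the prime filtration. Enumerate the primes $p_1 < p_2 < \cdots$, set $\mathcal{F}_M = \sigma(X_{p_1},\ldots,X_{p_M})$, and define
\[
V^{(M)} = \sum_{\substack{k \text{ squarefree}\\ \text{all prime factors}\, \le p_M}} b_k X_k.
\]
Every increment $V^{(M+1)}-V^{(M)}$ factors out the single new variable $X_{p_{M+1}}$, which is independent of $\mathcal{F}_M$ and of mean zero, so $(V^{(M)})_M$ is a martingale with $\mathbb{E}\bigl[(V^{(M)})^2\bigr] \le \sum_k b_k^2 < \infty$. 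By Doob's theorem $V^{(M)}$ converges almost surely to some limit $L^*$.

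The final step identifies $L^*$ with the limit of the natural partial sums $S_N = \sum_{n\le N} a_n X_n/n$. Writing $S_N = \sum_k X_k c_{k,N}$ with $c_{k,N} = \sum_{s^2 \le N/k} a_{ks^2}/(ks^2)$ and using orthonormality, one has $\mathbb{E}|S_N - L^*|^2 = \sum_k (b_k - c_{k,N})^2$; the tail estimate $|b_k - c_{k,N}| = O(1/\sqrt{kN})$ for $k \le N$ together with $|b_k| = O(1/k)$ for $k > N$ then forces this to go to zero, giving $S_N \to L^*$ in $L^2$. Passing to a sufficiently sparse subsequence $(N_j)$ upgrades this to almost-sure convergence of $(S_{N_j})$ by Borel--Cantelli, and the oscillations $\max_{N_j \le N \le N_{j+1}} |S_N - S_{N_j}|$ can be controlled by a Rademacher--Menshov-style maximal inequality applied to the orthonormal family $\{X_k\}$. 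The main obstacle is precisely this last reconciliation: the natural ordering by $n$ is not a refinement of the prime-factor filtration in which the martingale is clean, so the matching must go through $L^2$ plus a maximal inequality rather than directly.
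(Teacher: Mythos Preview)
Your approach is sound and genuinely different from the paper's. The paper bypasses the squarefree regrouping and the martingale altogether: it bounds the fourth moment of the \emph{unweighted} partial sums $T_N = \sum_{n \le N} a_n X_n$ by expanding and counting quadruples $(p,q,r,s)$ with $pqrs$ a perfect square, obtaining $\mathbb{E}|T_N|^4 \ll N^{2+\varepsilon}$; Markov's inequality plus Borel--Cantelli then give $T_N = O(N^{5/6})$ almost surely, and convergence of $\sum a_n X_n/n$ follows by partial summation. This is shorter and yields a quantitative almost-sure bound on the partial sums, whereas your route---squarefree kernel, orthonormality of $\{X_k\}$, $L^2$-bounded martingale along the prime filtration---is more structural and delivers the limiting random variable in $L^2$ directly.

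One step needs tightening, though the fix is easy. The Rademacher--Menshov inequality controls $\max_M \bigl|\sum_{k\le M} c_k\phi_k\bigr|$ for a fixed orthogonal sequence and fixed coefficients; but your $S_N$, written in the basis $\{X_k\}$, does not truncate this way---every coefficient $c_{k,N}$ moves with $N$---so the inequality does not apply off the shelf. You can simply avoid it. Since you already have $\mathbb{E}|S_N-L^*|^2 = O(N^{-1}\log N)$, take the polynomial subsequence $N_j=j^2$: then $\sum_j \mathbb{E}|S_{N_j}-L^*|^2<\infty$ still gives $S_{N_j}\to L^*$ almost surely, while the oscillation between consecutive $N_j$ is controlled \emph{deterministically} by
\[
\max_{N_j\le N\le N_{j+1}}|S_N-S_{N_j}|\;\le\;\sum_{N_j<n\le N_{j+1}}\frac{|a_n|}{n}\;\le\;A\log\frac{N_{j+1}}{N_j}\;=\;O(j^{-1})\;\to\;0.
\]
So no maximal inequality is needed; the only adjustment is to make the subsequence polynomial rather than ``sufficiently sparse''.
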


\begin{proof}
Assume that $|a_n|\leq C$ for all $n$. Consider the fourth moment of the weighted sum of $X_n$:

\[
\mathbb E(|a_1X_1+\ldots+a_nX_n|^4)=\sum_{p,q,r,s\leq n}a_pa_q\overline{a}_r\overline{a}_s\mathbb EX_{pqrs}.
\]

Using the boundedness of $a_n$ and noticing that $\mathbb EX_n=0$ unless $n$ is a square, in which case $\mathbb EX_n=1$, we get

\[
\mathbb E(|a_1X_1+\ldots+a_nX_n|^4)\leq C^4 \sum_{\substack{pqrs=m^2 \\ p,q,r,s\leq n}} 1.
\]

Now, if $pqrs=m^2$ and $p,q,r,s\leq n$ then $m\leq n^2$, so

\[\mathbb E(|a_1X_1+\ldots+a_nX_n|^4)\leq C^4\sum_{m\leq n^2}\tau_4(m^2).
\]

As $\tau_4(m^2)\ll m^\varepsilon$ for any $\varepsilon>0$ we obtain

\[
\mathbb E(|a_1X_1+\ldots+a_nX_n|^4)\ll n^{2+\varepsilon}.
\]

Choosing $\varepsilon=1/6$ we get by Markov's inequality

\[
\mathbb P(|a_1X_1+\ldots+a_nX_n|\geq n^{5/6})\ll \frac{n^{2+1/6}}{n^{20/6}}=n^{-7/6}.
\]

Hence, by Borel-Cantelli lemma we have $a_1X_1+\ldots+a_nX_n=O(n^{5/6})$ almost surely. Using partial summation, we obtain the convergence of

\[
\sum_{n\geq 1}\frac{a_nX_n}{n}.
\]
\end{proof}

Now we are going to use some results on primes in arthmetic progressions to prove the following theorem

\begin{theorem}
Let $\{a_n\}$ be a bounded sequence of real numbers such that the inequality

\[
\max_N \sum_{n\leq N}a_n\mlegendre{n}{p}\ll \sqrt{p}\ln p
\]

holds for all but at most $o(\pi(x))$ primes $p\leq x$ as $x\to +\infty$ (i.e. abovementioned inequality is true for almost all primes). Then for $\varepsilon=\pm 1$ random variables $L(a,x,\varepsilon)$ converge to $L(a)$ in distribution as $x \to +\infty$.
\end{theorem}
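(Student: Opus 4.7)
The plan is to establish convergence in distribution by proving pointwise convergence of characteristic functions. Introduce the truncations
\[
L_N(a,x,\varepsilon) = \sum_{n\le N}\frac{a_n}{n}\mlegendre{n}{\mathfrak{p}_x^\varepsilon}, \qquad L_N(a) = \sum_{n\le N}\frac{a_n X_n}{n},
\]
and use the Lipschitz bound $|e^{itu}-e^{itv}|\le|t|\,|u-v|$ and the triangle inequality
\[
\bigl|\mathbb{E}e^{itL(a,x,\varepsilon)} - \mathbb{E}e^{itL(a)}\bigr| \le |t|\,\mathbb{E}|L(a,x,\varepsilon) - L_N(a,x,\varepsilon)| + \bigl|\mathbb{E}e^{itL_N(a,x,\varepsilon)} - \mathbb{E}e^{itL_N(a)}\bigr| + |t|\,\mathbb{E}|L_N(a) - L(a)|
\]
to reduce the problem to three terms.

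The middle (finite-dimensional) term I would handle by equidistribution. Since $L_N(a,x,\varepsilon)$ depends only on the Legendre symbols $(\mlegendre{q}{\mathfrak{p}_x^\varepsilon})$ for primes $q\le N$, and by quadratic reciprocity every nontrivial product $\prod_{q\in S}\mlegendre{q}{p}$ over a nonempty $S$ of primes $\le N$ is a nonprincipal real Dirichlet character of $p$ modulo a divisor of $4\prod_{q\in S}q$, Dirichlet's theorem on primes in arithmetic progressions (applied within the class $p\equiv\varepsilon\pmod4$) forces the joint distribution of these Legendre symbols to converge to the uniform law on $\{\pm1\}^{\pi(N)}$ as $x\to\infty$, which is exactly the law of $(X_q)_{q\le N}$.

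For the third term, a direct variance computation (grouping $n=rs^2$, $m=rt^2$ with $r$ squarefree and using $\mathbb{E}X_nX_m = \mathbf{1}_{nm\text{ square}}$) yields $\mathbb{E}|L_N(a)-L(a)|^2 \ll (\log N)/N$, which vanishes as $N\to\infty$. For the first term, Abel summation applied to the hypothesis $\max_M|\sum_{n\le M}a_n\mlegendre{n}{p}|\le C\sqrt{p}\log p$ gives $|L(a,x,\varepsilon) - L_N(a,x,\varepsilon)| \le 2C\sqrt{\mathfrak{p}_x^\varepsilon}\log\mathfrak{p}_x^\varepsilon/N$ on a set of probability $1-o(1)$.

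The principal obstacle is that this last bound is of order $\sqrt{x}\log x/N$ in expectation, hence not uniformly small in $x$ for $N$ fixed. I would address this by letting the truncation depend on $x$, choosing $N=N(x)$ with $N(x)\gg\sqrt{x}\log^2 x$, so that the first term becomes $o(1)$ in probability. The cost is that for such a growing $N(x)$ the finite-dimensional step via Dirichlet's theorem is no longer directly available, since $\prod_{q\le N(x)}q$ exceeds every fixed power of $\log x$. Bridging this gap is the hardest part of the argument; I expect to do so via a direct second-moment comparison, expanding $\mathbb{E}|L_{N(x)}(a,x,\varepsilon)|^2$ and $\mathbb{E}|L(a)|^2$, showing that the diagonal (square $nm$) contributions match up to $o(1)$, and controlling the off-diagonal contribution by grouping over the squarefree part $d=\mathrm{rad}(nm)$ of the quadratic character, using Siegel--Walfisz for $d$ of polylogarithmic size and a counting estimate reflecting the scarcity of pairs $(n,m)$ with $n,m\le N(x)$ and $\mathrm{rad}(nm)=d$ for large $d$.
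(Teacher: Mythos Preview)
Your overall plan --- split off a long tail controlled by Abel summation and the hypothesis, and a short head controlled by equidistribution of the Legendre-symbol vector --- matches the paper's architecture. The paper makes a \emph{three}-way split at $N_1=\ln^3 x$ and $N_2=\sqrt{x}\ln^2 x$ into $A+B+C$: the tail $C$ (terms $n>N_2$) goes to zero in probability exactly by your Abel-summation argument, and the head $A$ (terms $n\le N_1$) converges in law to $L(a)$. For $A$ the paper uses the method of moments together with Carleman's criterion rather than characteristic functions: since $n_1\cdots n_k\le(\ln x)^{3k}$, Siegel--Walfisz computes every fixed moment of $A$, and an estimate $\mathbb{E}L(a)^{2k}\ll(C\ln k)^{2k}$ verifies Carleman's condition. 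Your characteristic-function route for the head is equivalent in effect once $N$ is taken polylogarithmic in $x$.

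The genuine gap in your proposal is the bridging step, and your sketch does not contain the idea that makes it work. First, matching $\mathbb{E}|L_{N(x)}(a,x,\varepsilon)|^2$ with $\mathbb{E}|L(a)|^2$ is neither sufficient for convergence in distribution nor the right target; what is actually needed is that the \emph{middle block} $B=\sum_{N_1<n\le N_2}\frac{a_n}{n}\mlegendre{n}{\mathfrak p_x^\varepsilon}$ itself tends to $0$ in probability. The paper gets this by expanding $\mathbb{E}B^2$ and then --- this is the point you are missing --- \emph{dropping the primality constraint by positivity}, replacing the average over primes $p\le x$, $p\equiv\varepsilon\pmod 4$, by a sum over all odd integers $d\le x$. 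The inner sum $\sum_{d\le x,\,2\nmid d}\mlegendre{m}{d}$ is then a character sum over \emph{integers}, so P\'olya--Vinogradov gives $O(\sqrt{m}\ln m)$ for non-square $m\le x\ln^4 x$; the square terms $m=\ell^2$ with $\ell>\ln^3 x$ contribute only $\sum_{\ell>\ln^3 x}\tau(\ell^2)/\ell^2$, which is negligible. By contrast, your proposed combination of Siegel--Walfisz for polylogarithmic $d=\mathrm{rad}(nm)$ and a scarcity count for large $d$ does not close the argument: for $d$ in the range $(\ln x)^A<d\le N(x)^2$ no nontrivial bound on $\sum_{p\le x}\chi_d(p)$ is available to you, and the pairs $(n,m)$ with prescribed radical are not nearly sparse enough (one has $\sum_{\mathrm{rad}(nm)\mid d}\frac{1}{nm}=(d/\varphi(d))^2$, and the unrestricted off-diagonal sum $\sum_{N_1<n,m\le N_2}\frac{1}{nm}$ is of order $(\ln x)^2$) to absorb the trivial estimate.
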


\begin{proof}

The proof will be divided into several parts in which we will treat different chunks of our series differently. We will formulate and use several lemmas concerning the distribution of prime numbers and the method of moments inside the proof. First of all, we split the sum in the definition of $L(a,x,\varepsilon)$ into three sums as follows:

\[
L(a,x,\varepsilon)=\sum_{n\leq\ln^3 x}\frac{a_n}{n}\mlegendre{n}{\mathfrak p_x^\varepsilon}+\sum_{\ln^3 x<n\leq \sqrt{x}\ln^2 x}\frac{a_n}{n}\mlegendre{n}{\mathfrak p_x^\varepsilon}+\sum_{n>\sqrt{x}\ln^2 x}\frac{a_n}{n}\mlegendre{n}{\mathfrak p_x^\varepsilon}=
\]

\[
A(a,x,\varepsilon)+B(a,x,\varepsilon)+C(a,x,\varepsilon).
\]

We will prove that as $x\to +\infty$ the variables $B(a,x,\varepsilon)$ and $C(a,x,\varepsilon)$ both converge to 0 in probability and then show that $A(a,x,\varepsilon)$ converges to $L(a)$ in distribution via the method of moments.

Let us prove that the infinite sum $C(a,x,\varepsilon)$ is usually small. This easily follows from the conditions of Theorem 5 because with probability $1-o(1)$ we have $\sum\limits_{n\leq N}a_n\mlegendre{n}{p}\ll \sqrt{p}\ln p$, so for almost all realizations $p$ of $\mathfrak p_x^\varepsilon$ we get

\[
\sum_{n>\sqrt{x}\ln^2 x}\frac{a_n}{n}\mlegendre{n}{p}=\int\limits_{\sqrt{x}\ln^2 x}\frac{d\left(\sum_{n\leq t}a_n\mlegendre{n}{p}\right)}{t}=-\frac{\sum_{n\leq \sqrt{x}\ln^2 x}a_n\mlegendre{n}{p}}{\sqrt{x}\ln^2 x}+
\]
\[
\int\limits_{\sqrt{x}\ln^2 x}\frac{\sum_{n\leq t}a_n\mlegendre{n}{p}}{t^2}dt=O\left(\frac{\sqrt{p}\ln p}{\sqrt{x}\ln^2 x}\right)=O\left(\frac{1}{\ln x}\right),
\]

as $p\leq x$. Therefore, $C(a,x,\varepsilon)$ converges to 0 in probability as $x\to +\infty$. To handle $B(a,x,\varepsilon)$, let us estimate the expectation of $B(a,x,\varepsilon)^2$. Notice the following property of $\mlegendre{n}{p}$:

\begin{lemma}
For any nonzero integer $a$ there is a Dirichlet character $\chi_a$ of modulus at most $4|a|$ such that for any odd integer $n$ we have $\mlegendre{a}{n}=\chi_a(n)$ and $\chi_a$ is nonprincipal if and only if $a$ is not a square.
\end{lemma}
\begin{proof}
For any $a$ there are $b$ and $c$ such that $a=bc^2$, $b$ is a fundamental discriminant and $2c$ is an integer. Now, for any odd integer $n$ we have

\[
\mlegendre{a}{n}=\mlegendre{b}{n}\mlegendre{c^2}{n}=\mlegendre{b}{n}\chi_{0,2c}(n).
\]

As $b$ is a fundamental discriminant, $\mlegendre{b}{n}$ is a Dirichlet character to the modulus $|b|$ (see \cite{IK}, p. 53). Therefore $\mlegendre{a}{n}=\mlegendre{b}{n}\chi_{0,2c}(n)=\chi_a(n)$, where $\chi_a$ has modulus at most $2|b|c\leq 4|b|c^2=4a$, which completes the proof.
\end{proof}

Now, the expectation of $B(a,x,\varepsilon)^2$ equals

\[
\frac{1}{\pi(x,4,\varepsilon)}\sum_{\substack{p\leq x \\ p\equiv \varepsilon\pmod 4}} \left|\sum_{\ln^3 x<n\leq \sqrt{x}\ln^2 x}\frac{a_n}{n}\mlegendre{n}{p}\right|^2
\]

Due to nonnegativity of summands, we can sum over all odd numbers instead of primes and get

\[
\mathbb EB(a,x,\varepsilon)^2\leq \frac{1}{\pi(x,4,\varepsilon)}\sum_{\substack{d\leq x \\ 2\nmid d}}\left|\sum_{\ln^3 x<n\leq \sqrt{x}\ln^2 x} \frac{a_n}{n}\mlegendre{n}{d}\right|^2=
\]

\[
=\frac{1}{\pi(x,4,\varepsilon)}\sum_{\substack{d\leq x \\ 2\nmid d}} \sum_{\ln^3 x<n,k<\sqrt{x}\ln^2 x}\frac{a_na_k}{nk}\mlegendre{nk}{d}=\frac{1}{\pi(x,4,\varepsilon)}\sum_{\substack{d \leq x \\ 2\nmid d}}\sum_{\ln^6 x<m<x\ln^4 x}\frac{b_m}{m}\mlegendre{m}{d},
\]

where $b_m=\sum\limits_{\ln^3 x<n,k<\sqrt{x}\ln^2 x}a_na_k$. As $|a_n|=O(1)$ for all $n$, we have $b_m=O(\tau(m))$. Changing the order of summation and using Lemma 2 we deduce the inequality

\[
\mathbb EB(a,x,\varepsilon)^2\leq \frac{1}{\pi(x,4,\varepsilon)}\sum_{\ln^6 x<m<x\ln^4 x}\frac{|b_m|}{m}\left|\sum_{d\leq x, 2\nmid d} \chi_m(d)\right|
\]

Now, if $m$ is a square, then inner sum is trivially estimated by $x$, while if it isn't a square, the P\'olya-Vinogradov inequality gives us the bound $O(\sqrt{m}\ln m)$. Also, $\pi(x,4,\varepsilon)\sim \frac{x}{2\ln x}$, therefore

\[
\mathbb EB(a,x,\varepsilon)^2\ll \frac{\ln x}{x}\left(x\sum_{\ln^3 x<l<\sqrt{x}\ln^2 x} \frac{\tau(l^2)}{l^2}+\sum_{\ln^6 x<m<x\ln^4 x}\frac{\tau(m)\ln m}{\sqrt{m}}\right).
\]

As for any $\delta>0$ the inequality $\tau(m)\ll m^\delta$ holds, we obtain

\[
\sum_{\ln^3 x<l<\sqrt{x}\ln^2 x}\frac{\tau(l^2)}{l^2}\ll \frac{1}{\ln^2 x}
\]

and

\[
\sum_{\ln^6 x<m<x\ln^4 x}\frac{\tau(m)\ln m}{\sqrt{m}}\ll x^{2/3}.
\]

These bounds result in the estimate

\[
\mathbb EB(a,x,\varepsilon)^2 \ll \frac{1}{\ln x},
\]

which implies that $B(a,x,\varepsilon)$ converges to $0$ in probability as $x\to +\infty$.

So, we are left with the shortest part of our sum. To prove that $A(a,x,\varepsilon)$ converges to $L(a)$ in distribution, we are going to use the moment method in the following form:

\begin{lemma}[(Carleman's criterion)]
Let $\xi_n$ be a sequence of real random variables such that for some random variable  $\xi$ and all natural numbers $k$ the identity
\[
\lim_{n\to +\infty}\mathbb E\xi_n^k=\mathbb E\xi^k
\]

holds.
Assume that $\xi$ satisfies Carleman's criterion

\[
\sum_{n\geq 1}(\mathbb E\xi^{2n})^{-1/2n}=+\infty.
\]

Then $\xi_n$ converges to $\xi$ in distribution for $n\to +\infty$.
\end{lemma}
\begin{proof}
See \cite{Akh}, p. 85.
\end{proof}
In the moment computation we will use the following classical result on primes in arithmetic progressions

\begin{lemma}[(Siegel-Walfisz theorem)]
Let $A>0$ be a fixed real number. Then there is a positive constant $c_A$ such that for any real $x>0$ and nonprincipal Dirichlet character $\chi$ modulo $q\leq (\ln x)^A$ the estimate

\[
\pi(x;\chi)=\sum_{p\leq x}\chi(p)=O\left(xe^{-c_A\sqrt{\ln x}}\right)
\]

holds.
\end{lemma}
\begin{proof}
See \cite{K}, p. 138.
\end{proof}

As an immediate consequence, we obtain the following result on expectations of Legendre symbols:

\begin{corollary}
For any positive integer $k$ there is a positive constant $c_k$ such that for all $0<n\leq (\ln x)^k$ and $\varepsilon=\pm 1$ we have

\[
\mathbb E\mlegendre{n}{\mathfrak{p}_x^\varepsilon}=\square(n)+O(\exp(-c_k\sqrt{\ln x})),
\]

where $\square(n)=1$ if $n$ is a square and $0$ otherwise.
\end{corollary}
\begin{proof}
Indeed, this expectation can be rewritten as

\[
\mathbb E\mlegendre{n}{\mathfrak{p}_x^\varepsilon}=\frac{1}{\pi(x,4,\varepsilon)}\sum_{\substack{p\leq x \\ p\equiv \varepsilon \pmod 4}}\mlegendre{n}{p}=\frac{1}{2\pi(x,4,\varepsilon)}\sum_{p\leq x}\left(\mlegendre{n}{p}+\varepsilon\mlegendre{-4n}{p}\right).
\]

Applying Lemma 2 and Lemma 4, we get the desired result.
\end{proof}

Using Corollary 1, one can easlily deduce the formula for $k-$th moment of $A(a,x,\varepsilon)$. Note first that

\[
A(a,x,\varepsilon)^k=\sum_{n_i\leq \ln^3 x}\frac{a_{n_1}a_{n_2}\ldots a_{n_k}}{n_1\ldots n_k}\mlegendre{n_1\ldots n_k}{\mathfrak{p}_x^\varepsilon}.
\]

Rearranging the summands according to the product of variables, we get

\[
A(a,x,\varepsilon)^k=\sum_{n\leq \ln^{3k}x}\frac{\tau_k(n;a,x)}{n}\mlegendre{n}{\mathfrak{p}_x^\varepsilon},
\]

where $\tau_k(n;a,x)=\sum\limits_{n_1\ldots n_k=n, n_i\leq \ln^3 x} a_{n_1}\ldots a_{n_k}$. Also, one can easily show that $|\tau_k(n;a,x)|\ll \tau_k(n)$ and $\tau_k(n;a,x)\to \tau_k(n;a)=\tau_k(n;a,\infty)$. Using the linearity of expectation and Corollary 1, we deduce that

\[
\mathbb EA(a,x,\varepsilon)^k=\sum_{n\leq \ln^k x}\frac{\tau_k(n^2;a,x)}{n^2}+O(\exp(-0.5c_k\sqrt{\ln x})).
\]

Thus, by dominated convergence theorem we obtain

\[
\lim_{x\to +\infty}\mathbb EA(a,x,\varepsilon)^k=\sum_{n}\frac{\tau_k(n^2;a)}{n^2}.
\]

Now, using dyadic subdivision, one can prove that the $k-$th moment of $L(a)$ exists and equals

\[
\mathbb EL(a)^k=\sum_{n_1,\ldots,n_k}\frac{a_{n_1}\ldots a_{n_k}\mathbb EX_{n_1\ldots n_k}}{n_1\ldots n_k}=\sum_n \frac{\tau_k(n^2;a)}{n^2},
\]

because the expectation in this sum is only nonzero when $n_1\ldots n_k$ is a square. We now need to prove that $L(a)$ satisfies Carleman's condition. To do so, notice that $|\tau_k(n^2;a)|\leq C^k \tau_k(n^2)$. Therefore,

\[
EL(a)^{2k}\leq C^{2k}\sum_n \frac{\tau_{2k}(n^2)}{n^2}.
\]

As $\tau_{2k}(n^2)$ is multiplicative, we get

\[
EL(a)^{2k}\leq C^{2k}\prod_p\left(\frac{1}{2}\left(\frac{1}{(1-p^{-1})^{2k}}+\frac{1}{(1+p^{-1})^{2k}}\right)\right).
\]

Let us split this product in two parts. For primes $p\leq k^2$ we use the inequality

\[
\frac{1}{2}\left(\frac{1}{(1-p^{-1})^{2k}}+\frac{1}{(1+p^{-1})^{2k}}\right)\leq \frac{1}{(1-p^{-1})^{2k}},
\]

from which we deduce by Mertens' third theorem that

\[
\prod_{p\leq k^2} \left(\frac{1}{2}\left(\frac{1}{(1-p^{-1})^{2k}}+\frac{1}{(1+p^{-1})^{2k}}\right)\right)\leq \prod_{p\leq k^2}\frac{1}{(1-p^{-1})^{2k}}\leq B^{2k}(\ln k)^{2k}
\]

for some absolute constant $B$. Further, for $p>k^2$ the relation
\[
\frac{1}{2}\left(\frac{1}{(1-p^{-1})^{2k}}+\frac{1}{(1+p^{-1})^{2k}}\right)=\frac{1+p^{-2}{2k\choose 2}+p^{-4}{2k\choose 4}+\ldots+p^{-2k}{2k\choose 2k}}{(1-p^{-2})^{2k}}
\]
holds.

Using the inequality ${2k\choose m}\leq (2k)^m$ for all $m$, we obtain for $k>2$ the estimate

\[
\frac{1}{2}\left(\frac{1}{(1-p^{-1})^{2k}}+\frac{1}{(1+p^{-1})^{2k}}\right)\leq \frac{1+8p^{-2}k^2}{(1-p^{-2})^{2k}}
\]

Thus for all $k>2$ we get
\[
\prod_{p>k^2} \left(\frac{1}{2}\left(\frac{1}{(1-p^{-1})^{2k}}+\frac{1}{(1+p^{-1})^{2k}}\right)\right)\leq \prod_{p>k^2}\frac{1+8p^{-2}k^2}{(1-p^{-2})^{2k}}
\]
\[\leq \left(\frac{\pi^2}{6}\right)^{2k}\exp\left(\sum_{p>k^2}\frac{8k^2}{p^2}\right)\ll\left(\frac{\pi^2}{6}\right)^{2k}.
\]

Combining these bounds we obtain for some fixed $B$ and $C$

\[
\mathbb EL(a)^{2k}\ll \left(\frac{\pi^2}{6}BC\ln k\right)^{2k}.
\]

This upper bound implies that

\[
\sum_{k\geq 1}(\mathbb EL(a)^{2k})^{-1/2k}\gg \sum_{k\geq 3}\frac{1}{\ln k}=+\infty.
\]

Therefore $L(a)$ satisfies Carleman's condition and this completes the proof.
\end{proof} 

Theorem 5 can be used in very different settings, but we are going to use the following corollary:

\begin{corollary}
If $a_n=\sum_m \lambda_m e^{2\pi i n\alpha_m}$ for some finite sequence of complex $\lambda_m$ and real $\alpha_m$ and $a_n$ is real for all $n$, then $L(a,x,\varepsilon)$ converges to $L(a)$ in distribution.
\end{corollary}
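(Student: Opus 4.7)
The plan is to verify the hypothesis of Theorem 5 for the sequence $a_n = \sum_m \lambda_m e^{2\pi i n \alpha_m}$ and then apply it directly. Since the sum over $m$ is finite, $|a_n| \leq \sum_m |\lambda_m|$ is bounded, so Lemma 1 guarantees that $L(a)$ is well-defined. It remains to establish the uniform bound
\[
\max_N \left| \sum_{n \leq N} a_n \mlegendre{n}{p} \right| \ll \sqrt{p} \ln p,
\]
which I will show holds for \emph{every} odd prime $p$ (stronger than the ``almost all'' demanded by Theorem 5). By the triangle inequality applied to the finite $m$-sum, this reduces to proving that for any real $\alpha$ and odd prime $p$,
\[
\max_N \left| \sum_{n \leq N} e^{2\pi i n \alpha} \mlegendre{n}{p} \right| \ll \sqrt{p} \ln p,
\]
with an absolute implied constant independent of $\alpha$ and $N$.

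To obtain this uniform estimate I would follow the standard P\'olya--Vinogradov route through Gauss sums. Since $\mlegendre{\cdot}{p}$ is a primitive real character modulo $p$, the identity
\[
\mlegendre{n}{p} = \frac{1}{\tau\left(\mlegendre{\cdot}{p}\right)} \sum_{a=1}^{p-1} \mlegendre{a}{p}\, e^{2\pi i a n/p}
\]
gives, after interchanging summations,
\[
\sum_{n \leq N} e^{2\pi i n \alpha} \mlegendre{n}{p} = \frac{1}{\tau\left(\mlegendre{\cdot}{p}\right)} \sum_{a=1}^{p-1} \mlegendre{a}{p} \sum_{n \leq N} e^{2\pi i n(\alpha + a/p)}.
\]
The inner geometric sum is bounded by $\min(N, 1/(2\|\alpha + a/p\|))$, where $\|\cdot\|$ denotes distance to the nearest integer. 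Since the values $\alpha + a/p$ for $a = 1, \ldots, p-1$ are spaced by $1/p$ on the circle, summing these minima yields $O(p \ln p)$ uniformly in $\alpha$ and $N$, by the standard harmonic-series estimate (treating the one or two $a$ closest to $-\alpha p \pmod p$ separately and using $\min \leq N$ there). Combining this with $|\tau(\mlegendre{\cdot}{p})| = \sqrt{p}$ gives the desired bound, after which Theorem 5 applies directly and yields the claimed convergence in distribution.

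The only conceptually nontrivial point is the uniformity in $\alpha$ of the character sum bound, and this is handled completely by the Gauss-sum expansion above; no approximation of $\alpha$ by rationals is needed. Everything else is bookkeeping around the finite linear combination.
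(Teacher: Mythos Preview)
Your reduction to bounding $\sum_{n\leq N}e^{2\pi i n\alpha}\mlegendre{n}{p}$ via the Gauss-sum expansion is the same as the paper's, but the claim that $\sum_{a=1}^{p-1}\min\bigl(N,\,1/(2\|\alpha+a/p\|)\bigr)=O(p\ln p)$ \emph{uniformly in $\alpha$ and $N$} is false, and this is exactly the step where your proof diverges from the paper's. The $p$ points $\{\alpha+a/p:a=0,\dots,p-1\}$ are equally spaced on the circle, and the one closest to an integer sits at distance $\|\alpha p\|/p$, not $1/p$. Hence the single worst term contributes $p/\|\alpha p\|$ to the sum, which exceeds $p\ln p$ whenever $\|\alpha p\|<1/\ln p$. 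Replacing that term by $N$ via the $\min$ does not help: Theorem~5 requires the bound to hold for \emph{all} $N$ (it is used in the partial-summation treatment of the tail $C(a,x,\varepsilon)$), so you cannot absorb an $N/\sqrt{p}$ into $\sqrt{p}\ln p$. Concretely, if $\alpha$ is irrational and $p$ is a prime with $\|\alpha p\|$ extremely small, the twisted partial sums, while bounded in $N$, are bounded only by $\asymp\sqrt{p}/\|\alpha p\|$, which can be arbitrarily larger than $\sqrt{p}\ln p$.

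The paper handles this by \emph{not} claiming the bound for every $p$: for irrational $\alpha$ it invokes equidistribution of $\alpha p$ modulo $1$ over primes to conclude that $\|\alpha p\|\geq 1/\ln p$ for all but a density-zero set of primes, and for nonzero rational $\alpha$ the bound $\|\alpha p\|\gg 1$ holds for all large $p$. With that lower bound in hand, the harmonic estimate goes through and yields $O(\sqrt{p}\ln p)$ for almost all $p$, which is all that Theorem~5 requires. So your assertion that ``no approximation of $\alpha$ by rationals is needed'' is precisely where the gap lies; controlling $\|\alpha p\|$ is unavoidable.
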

\begin{proof}
Obviously, $a_n$ is bounded. Therefore, it is enough to check that for any real $\alpha$ the estimate

\[
\sum_{n\leq N}e^{2\pi i \alpha n}\mlegendre{n}{p}\ll \sqrt{p}\ln p
\]

holds for almost all primes $p$. To show that this is indeed the case, note that if $\alpha$ is rational and nonzero then $||\alpha p||\gg 1$ for $p$ large enough and if $\alpha$ is irrational then $\alpha p_n$ is uniformly distributed modulo 1 (see \cite[p.489, Theorem 21.3]{IK}), so for almost all $p$ the inequality $||\alpha p||\geq \frac{1}{\ln p}$ holds. Using the Fourier expansion for the Legendre symbol, we get for almost all $p$

\[
\sum_{n\leq N}e^{2\pi i \alpha n}\mlegendre{n}{p}=\frac{1}{\tau\left(\mlegendre{\cdot}{p}\right)}\sum_{a=1}^{p-1}\sum_{n\leq N}\exp(2\pi in(\alpha-a/p))\ll \frac{1}{\sqrt{p}}\sum_{a=1}^{p-1}||\alpha-a/p||^{-1}\ll
\]

\[
\ll \frac{1}{\sqrt{p}}\sum_{a=-p/2}^{p/2}\frac{p}{|a+1/\ln p|}\ll\sqrt{p}\ln p, 
\]

which completes the proof for $\alpha\neq 0$. If $\alpha=0$, then the desired bound is just the P\'olya-Vinogradov inequality.
\end{proof}

Corollary 2 has a very useful implication on our main problem. Namely, from Section 2 we deduce that we need to study the distribution of $L(a^{\pm},x,\pm1)$ with $a^{\pm}_n$ being an exponential polynomial of variable $n\alpha.$ We are also going to use the following well-known formula for the Gauss sum:

\begin{lemma}
Let $p$ be an odd prime number. Then we have

\[
\tau\left(\mlegendre{\cdot}{p}\right)=\begin{cases}
\sqrt{p}\text{ if }p\equiv 1\pmod 4\\
i\sqrt{p}\text{ if }p\equiv 3\pmod 4.
\end{cases}
\]
\end{lemma}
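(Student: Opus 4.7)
The plan is to first pin down $\tau^2$ by a direct algebraic manipulation, which determines $\tau$ up to sign, and then to handle the delicate sign question by identifying $\tau$ with a pure quadratic exponential sum whose evaluation is classical.

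First, I would open up
\[
\tau^2=\sum_{a,b=0}^{p-1}\mlegendre{ab}{p}e^{2\pi i(a+b)/p},
\]
make the substitution $b\equiv ac\pmod p$ for $a\not\equiv 0$, and evaluate the inner geometric sum in $a$, which vanishes unless $1+c\equiv 0\pmod p$. The only surviving contribution is $c\equiv -1$, giving $\tau^2=\mlegendre{-1}{p}\,p$. Since $\mlegendre{-1}{p}$ equals $+1$ or $-1$ according to $p\pmod 4$, this shows $\tau\in\{\pm\sqrt{p}\}$ in the first case and $\tau\in\{\pm i\sqrt{p}\}$ in the second. The same computation, with $e^{2\pi i(a-b)/p}$ in place of $e^{2\pi i(a+b)/p}$, gives $|\tau|^2=p$ and confirms the modulus.

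For the sign, I would rewrite $\tau$ as a theta-type sum. For every residue $n\pmod p$ the number of $x\in\{0,\dots,p-1\}$ with $x^2\equiv n\pmod p$ equals $1+\mlegendre{n}{p}$ (this holds at $n=0$ as well, since then both sides equal $1$). Using $\sum_{n=0}^{p-1}e^{2\pi in/p}=0$ we therefore get
\[
\tau=\sum_{n=0}^{p-1}\Bigl(1+\mlegendre{n}{p}\Bigr)e^{2\pi in/p}=\sum_{x=0}^{p-1}e^{2\pi ix^2/p},
\]
so the problem reduces to evaluating the quadratic Gauss sum $G_p:=\sum_{x=0}^{p-1}e^{2\pi ix^2/p}$.

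To evaluate $G_p$, two classical routes are available and either will suffice. One is Gauss's original approach, applying Poisson summation to the Gaussian $e^{-\pi t x^2}$ on $\mathbb Z/p\mathbb Z$ and letting $t\to 0$ along the imaginary axis, where the functional equation of the theta function pins the phase exactly. The other is Schur's algebraic trick: the $p\times p$ DFT matrix $F_{jk}=e^{2\pi ijk/p}$ satisfies $F^2=pJ$ with $J$ a permutation, so its eigenvalues lie in $\{\pm\sqrt p,\pm i\sqrt p\}$; computing $\det F$ by the Vandermonde formula fixes the multiplicities, and then $G_p=\operatorname{tr}F$ comes out in closed form. The delicate sign determination is precisely the main obstacle—everything algebraic only gives $\tau^2$, and both classical routes involve a nontrivial analytic or spectral input to fix the phase. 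Both yield $G_p=\sqrt{p}$ for $p\equiv 1\pmod 4$ and $G_p=i\sqrt{p}$ for $p\equiv 3\pmod 4$, which is exactly the claim.
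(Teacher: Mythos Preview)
Your proof sketch is correct and follows the classical route: compute $\tau^2=\mlegendre{-1}{p}p$ by the substitution $b\equiv ac$, reduce to the pure quadratic sum $\sum_x e^{2\pi i x^2/p}$ via the $1+\mlegendre{n}{p}$ counting identity, and then invoke one of the standard sign arguments (Poisson/theta or Schur's DFT spectrum). All steps are sound.

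The paper, by contrast, does not give any argument at all: its proof consists solely of the citation ``See \cite{IK}, p.~49.'' So your approach is not so much different as simply more substantive---you have supplied an actual outline where the paper defers entirely to a reference. What you have written is essentially the proof one finds in Iwaniec--Kowalski or in most analytic number theory texts, so in spirit you and the paper agree; you have just unpacked the citation.
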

\begin{proof}
See \cite{IK}, p. 49.
\end{proof}

From this we get the following:

\begin{corollary}
Let $\alpha$ be a real number, $a^{+}_n(\alpha)=\sin 2\pi n\alpha$ and $a^{-}_n(\alpha)=1-\cos2\pi n\alpha$. Let $c(\alpha)$ be a relative lower density of primes $p$, satisfying $L(\alpha,p)\geq 0$, $c^{+}(\alpha)$ and $c^{-}(\alpha)$ be the probabilities of positivity of random variables $L(a^{+}(\alpha))$ and $L(a^{-}(\alpha))$. Then we have

\[
c(\alpha)\geq \frac{c^{+}(\alpha)+c^{-}(\alpha)}{2}.
\]

In particular, if $c^{+}(\alpha)+c^{-}(\alpha)>1$, then Conjecture 1 is true for $\alpha.$
\end{corollary}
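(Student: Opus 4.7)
The plan is to use the Fourier expansion of $L(\alpha,p)$ from Theorem 4 together with the explicit value of the Gauss sum in Lemma 5 to rewrite $L(\alpha,p)$, separately in each residue class of $p\bmod 4$, as a positive multiple of one of the real random series $L(a^{\pm}(\alpha),x,\pm 1)$ studied in Corollary 2, and then to transfer the asymptotic probability of positivity.

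I would begin by substituting $\tau\bigl(\mlegendre{\cdot}{p}\bigr)$ from Lemma 5 into Theorem 4 and folding the sum over $m\neq 0$ to a sum over $m\geq 1$ by pairing $m$ with $-m$. Writing
\[
\frac{1-e^{-2\pi i\alpha m}}{2\pi i m}=\frac{\sin 2\pi\alpha m}{2\pi m}-i\frac{1-\cos 2\pi\alpha m}{2\pi m}
\]
and using $\mlegendre{-m}{p}=(-1)^{(p-1)/2}\mlegendre{m}{p}$, I see that when $p\equiv 1\pmod 4$ only the odd-in-$m$ (real) part of the bracket survives, whereas when $p\equiv 3\pmod 4$ only the even-in-$m$ (imaginary) part survives. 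Multiplying by the corresponding value $\tau=\sqrt p$ or $\tau=i\sqrt p$, so that a factor of $i$ cancels in the second case, I obtain
\[
L(\alpha,p)=\frac{\sqrt p}{\pi}\sum_{m\geq 1}\frac{a^{\varepsilon}_m(\alpha)}{m}\mlegendre{m}{p}\quad\text{whenever }p\equiv \varepsilon\pmod 4,\ \varepsilon=\pm 1.
\]
Since $\sqrt p/\pi>0$, the event $\{L(\alpha,p)\geq 0\}$ coincides with the event $\{L(a^{\varepsilon}(\alpha),x,\varepsilon)\geq 0\}$ when $p$ is drawn as $\mathfrak p_x^\varepsilon$.

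Next, Corollary 2 applies to $a^{\pm}(\alpha)$, which are real exponential polynomials in $n\alpha$ by Euler's formula, so $L(a^{\pm}(\alpha),x,\pm 1)\xrightarrow{d}L(a^{\pm}(\alpha))$. I would then use the Portmanteau theorem on the open half-line $(0,+\infty)$ to deduce
\[
\liminf_{x\to +\infty}\mathbb P\bigl(L(a^{\varepsilon}(\alpha),x,\varepsilon)\geq 0\bigr)\geq \liminf_{x\to +\infty}\mathbb P\bigl(L(a^{\varepsilon}(\alpha),x,\varepsilon)>0\bigr)\geq c^{\varepsilon}(\alpha),
\]
which sidesteps any question about an atom of $L(a^{\varepsilon}(\alpha))$ at zero.

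Finally, decomposing primes by their class mod $4$,
\[
\frac{\#\{p\leq x:L(\alpha,p)\geq 0\}}{\pi(x)}=\sum_{\varepsilon=\pm 1}\frac{\pi(x,4,\varepsilon)}{\pi(x)}\,\mathbb P\bigl(L(\alpha,\mathfrak p_x^\varepsilon)\geq 0\bigr),
\]
and combining the bound above with $\pi(x,4,\pm 1)/\pi(x)\to 1/2$, taking $\liminf$ yields $c(\alpha)\geq \tfrac12(c^{+}(\alpha)+c^{-}(\alpha))$, as required. The one point that needs care is the bookkeeping that pairs the real part of the Fourier kernel with $\tau=\sqrt p$ and the imaginary part with $\tau=i\sqrt p$ so as to recover exactly the sums $L(a^{\pm}(\alpha),x,\pm 1)$ with positive prefactor; once this identification is established, everything else is a clean application of Corollary 2 and the Portmanteau theorem.
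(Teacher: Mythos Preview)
Your proposal is correct and follows essentially the same route as the paper: substitute the Gauss sum from Lemma~5 into the Fourier expansion of Theorem~4, fold the sum over $m\neq 0$ using $\mlegendre{-1}{p}=(-1)^{(p-1)/2}$ to isolate the sine sum for $p\equiv 1\pmod 4$ and the $(1-\cos)$ sum for $p\equiv 3\pmod 4$, then apply Corollary~2 together with the Portmanteau inequality for the open set $(0,+\infty)$ and the prime number theorem in arithmetic progressions. The only cosmetic difference is that the paper carries out the $m\leftrightarrow -m$ pairing explicitly in each residue class rather than via your real/imaginary decomposition of the Fourier coefficient, but the computations are identical.
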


\begin{proof}
First, it is enough to only consider large enough odd prime numbers, because we can ignore any finite number of primes. Now, if $p\equiv 1 \pmod 4$ then we have  by Theorem 4

\[
L(\alpha,p)=\tau\left(\mlegendre{\cdot}{p}\right)\sum_{m\in \mathbb Z, m\neq 0}\frac{1-e^{-2\pi i \alpha m}}{2\pi i m}\mlegendre{m}{p}=
\]
\[=\sqrt{p}\sum_{m> 0}\left(\frac{1-e^{-2\pi i \alpha m}}{2\pi i m}-\frac{1-e^{2\pi i \alpha m}}{2\pi i m}\right)\mlegendre{m}{p}=\frac{\sqrt{p}}{\pi}\sum_{m>0}\frac{\sin 2\pi m\alpha}{m}\mlegendre{m}{p},
\]

because $\mlegendre{-m}{p}=\mlegendre{-1}{p}\mlegendre{m}{p}=\mlegendre{m}{p}.$ Therefore, the proportion of primes $p\leq x$ with $p\equiv 1 \pmod p$ with $L(\alpha,p)\geq 0$ among all primes $p\leq x$ is

\[
\frac{\pi(x;4,1)}{\pi(x)}\mathbb P(L(a^{+},x,1)\geq 0)\sim \frac{\mathbb P(L(a^{+},x,1)\geq 0)}{2}
\]

as $x\to +\infty$. Now, as $L(a^{+},x,1)$ converges to $L(a^{+})$ in distribution and the subset $\mathbb R_{>0} \subset \mathbb R$ is open, we have

\[
\liminf_{x\to +\infty}\mathbb P(L(a^{+},x,1)\geq 0)\geq \liminf_{x\to +\infty}\mathbb P(L(a^{+},x,1)>0)\geq \mathbb P(L(a^{+})>0)=c^{+}(\alpha).
\]

If $p\equiv 3 \pmod 4$, then we have similarly

\[
L(\alpha,p)=\tau\left(\mlegendre{\cdot}{p}\right)\sum_{m\in \mathbb Z, m\neq 0}\frac{1-e^{-2\pi i \alpha m}}{2\pi i m}\mlegendre{m}{p}=
\]
\[=i\sqrt{p}\sum_{m> 0}\left(\frac{1-e^{-2\pi i \alpha m}}{2\pi i m}+\frac{1-e^{2\pi i \alpha m}}{2\pi i m}\right)\mlegendre{m}{p}=\frac{\sqrt{p}}{\pi}\sum_{m>0}\frac{1-\cos 2\pi m\alpha}{m}\mlegendre{m}{p}.
\]

Hence, the proportion of primes $p\leq x$ with $p\equiv 3 \pmod p$ with $L(\alpha,p)\geq 0$ among all primes $p\leq x$ is

\[
\frac{\pi(x;4,3)}{\pi(x)}\mathbb P(L(a^{-},x,-1)\geq 0)\sim \frac{\mathbb  P(L(a^{-},x,-1)\geq 0)}{2}.
\]

And we also have

\[
\liminf_{x\to +\infty}\mathbb P(L(a^{-},x,1)\geq 0)\geq \liminf_{x\to +\infty}\mathbb P(L(a^{-},x,1)>0)\geq \mathbb P(L(a^{-})>0)=c^{-}(\alpha).
\]

Therefore, we obtain

\[
\liminf_{x\to +\infty}\frac{\#\{p\leq x: L(\alpha,p)\geq 0\}}{\pi(x)}\geq \frac{c^{+}(\alpha)+c^{-}(\alpha)}{2},
\]

as needed.

\end{proof} 

\section{Rational \texorpdfstring{$\alpha$}{alpha} with small denominators}

In this section we are going to prove Theorem 2. In other words, we are going to prove Conjecture 1 for the following values of $\alpha$:

\[
\alpha=0,\frac{1}{2},\frac{1}{3},\frac{1}{4},\frac{1}{6},\frac{1}{8},\frac{3}{8},\frac{1}{12},\frac{5}{12},\frac{1}{5},\frac{2}{5}.
\]

All the denominators of these numbers, except for $5$, are precisely the numbers $n$ such that the group $\left(\mathbb Z/n\mathbb Z\right)^*$ has exponent $1$ or $2$, i.e. such that $a^2\equiv 1\pmod n$ for all $a$ coprime to $n$. The reason for such a choice of denominators is that for these $n$ all the Dirichlet characters modulo $n$ are real-valued and so we don't need to consider any complex Euler products. On the other hand, in the case of $\alpha=\frac{1}{5}$ or $\frac{2}{5}$ we need to compute arguments of certain complex-valued Euler products, which is going to make things more complicated. Probably, the list of $n$ such that Conjecture 1 follows from conditions $n\alpha\in \mathbb Z$ and $\alpha<\frac{1}{2}$ can be expanded in a way similar to what is presented in this section, but we don't know if it works for all rational $\alpha$.

For a natural number $m$ the function $\chi_{0,m}(\cdot)$ will be the principal character modulo $m$. If $\chi$ is a Dirichlet character and $\beta$ is not an integer then we are going to define $\chi(\beta)$ to be equal $0$. So, for example, $\chi_{0,2}\left(\frac{n}{2}\right)$ is a 4-periodic function with values $0,1,0,0,0,1,\ldots$
 
\begin{proof}[of Theorem 2]
Let us start with a few simple cases.

If $\alpha=0$ then our conjecture is trivial, as we always have $L(0,p)=0$, so $c(0)=1$.

If $\alpha=\frac{1}{2}$, then

\[
a^{+}_n(\alpha)=\sin \pi n=0
\]

for all $n$ and

\[
a^{-}_n(\alpha)=1-\cos\pi n=\begin{cases}
2 \text{ if }n\text{ is odd}\\
0 \text{ otherwise}
\end{cases}
\]

From these formulas we get

\[
L(a^{+})=0
\]

for all possible realizations of $(X_n)$. Therefore, for all primes $p\equiv 1\pmod 4$ we have $L(1/2,p)=0$. Indeed, in the previous section we learned that for $p \equiv 1 \pmod 4$ the quantity $\frac{\pi}{\sqrt{p}}L(\alpha,p)$ is a particular realization of random variable $L(a^{+})$, while for $p\equiv 3 \pmod 4$ it is a realization of random variable $L(a^{-})$. From our formulas we also get

\[
L(a^{-})=2\sum_{n>0}\frac{\chi_{0,2}(n)X_n}{n}=2\prod_{p}\left(1-\frac{\chi_{0,2}(p)X_p}{p}\right)^{-1}\geq 0,
\]

due to multiplicativity of $\chi_{0,2}(n)X_n$. Therefore, for all primes $p$ we have $L(1/2,p)\geq 0$, so $c(1/2)=1$.

Now, for $\alpha=\frac{1}{3}$ one can easily check that

\begin{equation}
\label{aplus3}    
a_n^{+}(1/3)=\sin\frac{2\pi n}{3}=\frac{\sqrt{3}}{2}\mlegendre{n}{3}
\end{equation}

and

\begin{equation}
\label{amin3}    
a_n^{-}(1/3)=1-\cos\frac{2\pi n}{3}=\frac{3}{2}\chi_{0,3}(n).
\end{equation}

and we again get nonnegativity of all realizations from the Euler products for corresponding series. Hence in this case we again have $c(\alpha)=c(1/3)=1$.

The same is true for $\alpha=\frac{1}{4}$, because in that case we get

\[
a_n^{+}(1/4)=\sin\frac{\pi n}{2}=\mlegendre{-4}{n}=\chi_4(n)
\]

and

\[
a_n^{-}(1/4)=1-\cos\frac{\pi n}{2}=\chi_{0,2}(n)+2\chi_{0,2}\left(\frac{n}{2}\right),
\]

therefore $c(1/4)=1$, because

\[
L(a^{+})=\prod_p \left(1-\frac{\chi_4(p)X_p}{p}\right)^{-1}\geq 0
\]

and

\[
L(a^{-})=\prod_p \left(1-\frac{\chi_{2,0}(p)X_p}{p}\right)^{-1}+X_2\prod_p \left(1-\frac{\chi_{0,2}(p)X_p}{p}\right)^{-1}=
\]
\[
(1+X_2)\prod_{p>2} \left(1-\frac{X_p}{p}\right)^{-1}\geq 0.
\]
The factor $X_2$ appears here because for any $\chi$ and any $d$ we have

\[
\sum_{n}\frac{\chi(n/d)X_n}{n}=\sum_{n}\frac{\chi(n)X_{nd}}{nd}=\frac{X_d}{d}\sum_n \frac{\chi(n)X_n}{n}.
\]

For $\alpha=\frac{1}{6}$ we once again obtain $c(1/6)=1$, but for a more complicated reasons.

Here we have

\[
a_n^{+}(\alpha)=\sin\frac{\pi n}{3}.
\]

The values of $a_n^{+}$ are $\frac{\sqrt{3}}{2},-\frac{\sqrt{3}}{2},0,-\frac{\sqrt{3}}{2},\frac{\sqrt{3}}{2},0,\ldots$ (the sequence is 6-periodic)
so we get

\[
a_n^{+}(\alpha)=\frac{\sqrt{3}}{2}\left(\chi_6(n)+\mlegendre{n/2}{3}\right).
\]

Here $\chi_6(n)=\pm 1$ if $n\equiv \pm 1\pmod 6$ and 0 otherwise. Note also that $\chi_6(n)=\chi_{0,2}(n)\mlegendre{n}{3}.$ Therefore,

\[L(a^{+})=\frac{\sqrt{3}}{2}\left(\sum_n\frac{\chi_{0,2}(n)\mlegendre{n}{3}X_n}{n}+\sum_n\frac{\mlegendre{n/2}{3}X_n}{n}\right)=\]

\[
\frac{\sqrt{3}}{2}(1+X_2)\prod_{p}\left(1-\frac{\mlegendre{p}{3}X_p}{p}\right)^{-1}\geq 0,
\]

so we again have $L(\alpha,p)\geq 0$ for all $p\equiv 1 \pmod 4$.

Values of $a_n^{-}$ are $\frac{1}{2},\frac{3}{2},2,\frac{3}{2},\frac12,0\ldots$ and this function can be expanded as follows:

\[
a_n^{-}=1-\cos\frac{\pi n}{3}=2\chi_{0,2}\left(\frac{n}{3}\right)+\frac{1}{2}\chi_{0,3}(n)+\chi_{0,3}\left(\frac{n}{2}\right).
\]

Nonnegativity is not obvious from this formula, so let us work with the Euler products

\[
L(a^{-})=\sum_{n}\frac{a_n^{-}}{n}=\frac{2X_3}{3}\sum_n \frac{\chi_{0,2}(n)X_n}{n}+\frac{1+X_2}{2} \sum_{n}\frac{\chi_{0,3}(n)X_n}{n}=
\]
\[
=\left(\frac{2X_3}{3}\left(1-\frac{X_2}{2}\right)+\frac{1+X_2}{2}\left(1-\frac{X_3}{3}\right)\right)\sum_n \frac{X_n}{n}=\frac{1+X_2+X_3-X_2X_3}{2}\sum_n \frac{X_n}{n}\geq 0.
\]
This calculation shows that $c(1/6)=1.$

Now we are going to consider a different set of values $\alpha$ for which we don't have $c(\alpha)=1$.

If $\alpha=\frac18$ then the first $8$ values of $a_n^{+}(1/8)=\sin\frac{\pi n}{4}$ are equal to \[\frac{\sqrt{2}}{2},1,\frac{\sqrt{2}}{2},0,-\frac{\sqrt{2}}{2},-1,-\frac{\sqrt{2}}{2},0,\] which is equal to $\frac{\sqrt{2}}{2}\mlegendre{-2}{n}+\chi_4\left(\frac{n}{2}\right)$ thus we obtain

\[
L(a^{+})=\frac{\sqrt{2}}{2}\sum_n\frac{\mlegendre{-2}{n}X_n}{n}+\frac{X_2}{2}\sum_n \frac{\chi_4(n)X_n}{n}
\]

and hence $c^{+}(1/8)>\frac12$, because for $X_2=+1$ we have $L(a^{+})\geq 0$ and for $X_2=-1$ our random variable is positive with positive probability, because

\[
\mathbb E(L(a^{+})\mid X_2=-1)=\frac{\sqrt{2}-1}{2}\sum_{2\nmid n}\frac{1}{n^2}=\frac{(\sqrt{2}-1)\pi^2}{18}>0.
\]

On the other hand, the first $8$ values of $a_n^{-}(1/8)$ are 
\[
1-\frac{\sqrt{2}}{2},1,1+\frac{\sqrt{2}}{2},2,1+\frac{\sqrt{2}}{2},1,1-\frac{\sqrt{2}}{2},0
\]

and one can check that we have

\[
a_n^{-}(1/8)=\chi_{0,2}(n)+\chi_{0,2}(n/2)+2\chi_{0,2}(n/4)-\frac{\sqrt{2}}{2}\mlegendre{2}{n}.
\]

Therefore

\[
L(a^{-})=\left(\frac32+\frac{X_2}{2}\right)\sum_n \frac{\chi_{0,2}(n)X_n}{2}-\frac{\sqrt{2}}{2}\sum_n \frac{\mlegendre{2}{n}X_n}{n}\geq
\]
\[
\geq \sum_n \frac{\chi_{0,2}(n)X_n}{n}-\frac{\sqrt{2}}{2}\sum_n \frac{\mlegendre{2}{n}X_n}{n}=F-\frac{\sqrt{2}}{2}G.
\]

Now, let us notice, that the distribution of $(\lambda(n)X_n)$, where $\lambda(n)$ is the Liouville's function, coincides with the distribution of $(X_n)$, as $-X_p$ are also independent and have Rademacher's distribution. On the other hand, if we replace $X_n$ by $\lambda(n)X_n$, $F$ and $G$ will transform into

\[
\sum_n \frac{\lambda(n)\chi_{0,2}(n)X_n}{n}=\prod_{p>2}\left(1+\frac{X_p}{p}\right)^{-1}=\prod_{p>2}\left(1-\frac{1}{p^2}\right)^{-1}\left(1-\frac{X_p}{p}\right)=\frac{\pi^2}{9F}
\]

and (by the same argument) $\frac{\pi^2}{9G}$, respectively. Also, $\ln F-\ln G$ has continuous distribution (because characteristic function goes to $0$ at infinity), so $\mathbb P(L(a^{-})>0)\geq \frac12$, because $F-\frac{\sqrt{2}}{2}G\geq 0$ holds either for $X_n$ or for $\lambda(n)X_n$, so we get $c(1/8)>\frac12$.

As for the second case of denominator $8$, i.e. $\alpha=\frac38$, using $a_{3n}^{\pm}(1/8)=a_{n}^{\pm}(3/8)$ we get

\[
L(a^{+}(3/8))=\frac{\sqrt{2}}{2}\sum_n\frac{\mlegendre{-2}{n}X_n}{n}-\frac{X_2}{2}\sum_n \frac{\chi_4(n)X_n}{n},
\]

so $c^{+}(3/8)=c^{+}(1/8)>\frac12$. Also, in the formula for $L(a^{-})$ we will get $+G$ instead of $-G$, so that

\[
L(a^{-})\geq F+\frac{\sqrt{2}}{2}G\geq 0.
\]

Hence we get $c^{-}(3/8)=1$ and $c(3/8)\geq \frac34$.

The last two cases without any complex Dirichlet characters are $\alpha=\frac{1}{12}$ and $\alpha=\frac{5}{12}$.

Let us start with $\alpha=\frac{1}{12}$. For $a_n^{+}=\sin\frac{\pi n}{6}$ we get the first twelve values

\[
\frac12,\frac{\sqrt{3}}{2},1,\frac{\sqrt{3}}{2},\frac12,0,-\frac12,-\frac{\sqrt{3}}{2},-1,-\frac{\sqrt{3}}{2},-\frac12,0,
\]

find the formula

\[
a_n^{+}(1/12)=\frac12 \chi_4(n)\chi_{0,3}(n)+\frac{\sqrt{3}}{2}\mlegendre{n/2}{3}\chi_{0,2}(n/2)+\chi_4(n/3)+\frac{\sqrt{3}}{2}\mlegendre{n/4}{3}
\]

and get

\[
L(a^{+})=\frac{1+X_3}{2}\sum_n \frac{\chi_4(n)X_n}{n}+\frac{\sqrt{3}(X_2+1)}{4}\sum_n \frac{\chi_3(n)X_n}{n},
\]

thus, $L(1/12,p)\geq 0$ for all $p\equiv 1 \pmod 4$.

For cosines we compute

\[
1-\cos\frac{\pi n}{6}=1-\frac{\sqrt{3}}{2},\frac12,1,\frac32,1+\frac{\sqrt{3}}{2},2,1+\frac{\sqrt{3}}{2},\frac32, 1,\frac12,1-\frac{\sqrt{3}}{2},0,
\]

so that

\[
a_n^{-}=-\frac{\sqrt{3}}{2}\mlegendre{12}{n}+\chi_{0,2}(n)+\frac12\chi_{0,6}(n/2)+\frac32 \chi_{0,3}(n/4)+2\chi_{0,2}(n/6).
\]

Observe that $a_n^{-}$ is always nonnegative, so that we have $\mathbb EL(a^{-})>0$ and $c^{-}(1/12)>0$, which results in $c(1/12)>\frac12.$

When $\alpha=\frac{5}{12}$ we apply the formula $a_n^{\pm}(5/12)=a_{5n}(1/12)$, and for $a^{+}$ we get

\[
L(a^{+})=\frac{1+X_3}{2}\sum_n \frac{\chi_4(n)X_n}{n}-\frac{\sqrt{3}(X_2+1)}{4}\sum_n \frac{\chi_3(n)X_n}{n},
\]

so that $L(a^{+})>0$ for almost all $X_n$ with $X_2=-1$ and $X_3=1$, therefore $c^{+}(5/12)\geq\frac14$. On the other hand, we have $\mlegendre{12}{5}=-1$, so

\[
a_n^{-}=\frac{\sqrt{3}}{2}\mlegendre{12}{n}+\chi_{0,2}(n)+\frac12\chi_{0,6}(n/2)+\frac32 \chi_{0,3}(n/4)+2\chi_{0,2}(n/6),
\]

which gives

\[
L(a^{-})=\frac14(1+(1-X_2)(1-X_3))\sum_n \frac{X_n}{n}+\frac{\sqrt{3}}{2}\sum_n \frac{\mlegendre{12}{n}X_n}{n},
\]

so that $c^{+}(5/12)=1$ and $c(5/12)\geq \frac{1+1/4}{2}=\frac{5}{8}$.

Now, for the last case, where we have to deal with some complex characters.
If $\alpha=\frac15$, then the function $a_n^{-}=1-\cos\frac{2\pi n}{5}$ takes values

\[
\frac{5-\sqrt{5}}{4},\frac{5+\sqrt{5}}{4},\frac{5+\sqrt{5}}{4},\frac{5-\sqrt{5}}{4},0
\]

and we get

\[
L(a^{-})=\sum_n \frac{\left(\frac52\chi_{0,5}(n)-\frac{\sqrt{5}}{2}\mlegendre{n}{5}\right)X_n}{n}=\frac52 H-\frac{\sqrt{5}}{2}T,
\]

where

\[
H=\sum_n \frac{\chi_{0,5}(n)X_n}{n}=\prod_{p\neq 5}\left(1-\frac{X_p}{p}\right)^{-1}
\]

and

\[
T=\sum_n \frac{\mlegendre{n}{5}X_n}{n}=\prod_{p\neq 5}\left(1-\frac{\mlegendre{n}{p}X_p}{p}\right)^{-1}.
\]

The map $X_n\mapsto \lambda(n)X_n$ transforms $H$ into $\frac{4\pi^2}{25H}$ and $T$ into $\frac{4\pi^2}{25T}$, so $c^{-}(1/5)\geq \frac12.$

Expressions for $\sin\frac{2\pi n}{5}$ involve nested radicals, which gives us the formula

\[
\sin\frac{2\pi n}{5}=\frac{A-iB}{2}\kappa(n)+\frac{A+iB}{2}\overline{\kappa(n)},
\]

where $\kappa$ is a Dirichlet character modulo $5$ with $\kappa(2)=i$ and

\[
A=\sqrt{\frac{5+\sqrt{5}}{8}}, B=\sqrt{\frac{5-\sqrt{5}}{8}}.
\]

Therefore, the inequality $L(a^{+})\geq 0$ is equivalent to

\[
\mathrm{Re}\,\left((A-iB)\sum_{n}\frac{\kappa(n)X_n}{n}\right)\geq 0
\]

Now, $A+iB=\sqrt{A^2+B^2}e^{i\varphi}$ with $\varphi=\arctan\left(\frac{B}{A}\right)\approx 0.553$ and our inequality takes the form

\[
\cos\left(-\varphi+\mathrm{arg}\,\left(\sum_n \frac{\kappa(n)X_n}{n}\right)\right)\geq 0.
\]

The sum inside cosine can be expanded into the Euler product:

\[
\sum_n \frac{\kappa(n)X_n}{n}=\prod_p\left(1-\frac{\kappa(p)X_p}{p}\right)^{-1}.
\]

Obviously, if $\kappa(p)$ is real, then $p$ gives no contribution to the sum, and if $\kappa(p)$ is not real (which is equivalent to $p\equiv \pm 2\pmod 5$) then

\[
\left(1-\frac{\kappa(p)X_p}{p}\right)^{-1}=\frac{\exp(\kappa(p)X_p\arctan{\frac{1}{p}})}{\sqrt{1+p^{-2}}}.
\]

From this we deduce the formula

\[
\sum_n \frac{\kappa(n)X_n}{n}=Me^{i\xi},
\]

where $M$ is an almost surely positive real random variable and

\[
\xi=\sum_{p\equiv \pm 2\pmod 5}\frac{\kappa(p)}{i}X_p\arctan{\frac{1}{p}},
\]

which is also real.
Therefore, our desired inequality takes form

\[
\cos(\xi-\varphi)\geq 0.
\]

Notice that if it is not true, then we should have $|\xi-\varphi|>\frac{\pi}{2}$, in which case $|\xi|>\frac{\pi}{2}-\varphi.$

By Chebyshev inequality, probability of this event is at most

\[
\frac{\mathbb E\xi^2}{\left(\frac{\pi}{2}-\varphi\right)^2}\leq \frac{0.3536}{1.077}<\frac13,
\]

because

\[
\mathbb E\xi^2=\sum_{p\equiv \pm 2\pmod 5}\arctan^2{\frac{1}{p}}\approx 0.35355.
\]

From this we obtain $c^{+}(1/5)\geq \frac23$ and so $c(1/5)\geq \frac{7}{12}>\frac12$, as needed.

If $\alpha=\frac25$, then the proof is easier, because in this case we have

\[
a_n^{-}=1-\cos\frac{4\pi n}{5}=\frac52\chi_{0,5}(n)+\frac{\sqrt{5}}{2}\mlegendre{n}{5},
\]

thus

\[
L(a^{-})=\frac52 H+\frac{\sqrt{5}}{2}T\geq 0,
\]

so for all $p\equiv 3 \pmod 4$ we have $L(\frac25,p)\geq 0$. On the other hand, the set of $p\equiv 1 \pmod 4$ with $L(\frac25,p)>0$ has a positive lower density, because we have

\[
\mathbb EL(a^{+})=\sum_n \frac{\sin\frac{4\pi n^2}{5}}{n^2}=\sqrt{\frac{5-\sqrt{5}}{8}}\sum_{n>0}\frac{\mlegendre{n}{5}}{n^2}>0.
\]

From this we get $c(2/5)>\frac12$, which concludes our proof.
\end{proof}

\section{Values of \texorpdfstring{$\alpha$ close to $\frac13$}{alpha close to one-third}}

In the previous section we proved Conjecture 1 for several special values of $\alpha$, using expansion with Dirichlet characters and certain symmetry considerations. Here we are going to use a different approach to the main conjecture. One can notice that all the random variables that we constructed are linear combinations of a fixed sequence of random variables $X_n$ with coefficients that are smooth functions of a parameter $\alpha$. Using this observation, we are going to prove Theorem 3.

Let us define certain class of random variables that appears naturally when we study $L(a^{\pm})$. 
\begin{definition}
Let $\sigma^2>0$. We say that random variable lies in the class $L(\sigma^2)$, if there is an infinite sequence of real numbers $a_1,a_2,\ldots$ with

\[
\sum_{i}a_i^2\leq \sigma^2.
\]

and a sequence $\kappa_1,\kappa_2,\ldots$ of independent Rademacher random variables such that

\[
\eta=\sum_i a_i\kappa_i.
\]
\end{definition}

The next lemma will allow us to control the tail of distribution from the class $L(\sigma^2)$.

\begin{lemma}
Let $\eta$ be in $L(\sigma^2)$. Then for any $T>0$ we have

\[
\mathbb P(\eta\geq T)\leq \exp\left(-\frac{T^2}{2\sigma^2}\right).
\]
\end{lemma}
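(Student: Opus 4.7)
The plan is to use the standard Chernoff (exponential moment) bound, which makes this a textbook sub-Gaussian tail estimate for Rademacher sums. First I would reduce to finite sums: since $\sum_i a_i^2\leq \sigma^2<\infty$, Kolmogorov's theorem (or just the $L^2$ convergence of $\sum_i a_i\kappa_i$) guarantees that the defining series converges almost surely, and by monotone/dominated convergence it suffices to prove the inequality for the partial sums $\eta_N=\sum_{i\leq N}a_i\kappa_i$ and then let $N\to\infty$.

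For fixed $N$ and any $\lambda>0$, Markov's inequality applied to $e^{\lambda\eta_N}$ yields
\[
\mathbb P(\eta_N\geq T)\leq e^{-\lambda T}\,\mathbb E e^{\lambda\eta_N}=e^{-\lambda T}\prod_{i\leq N}\mathbb E e^{\lambda a_i\kappa_i}=e^{-\lambda T}\prod_{i\leq N}\cosh(\lambda a_i),
\]
using the independence of the $\kappa_i$ and $\mathbb E e^{\lambda a_i\kappa_i}=\tfrac12(e^{\lambda a_i}+e^{-\lambda a_i})=\cosh(\lambda a_i)$. The key elementary inequality is
\[
\cosh x\leq e^{x^2/2},
\]
which follows by comparing the Taylor series termwise: $\frac{x^{2n}}{(2n)!}\leq \frac{(x^2/2)^n}{n!}$ since $(2n)!\geq 2^n n!$. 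Applying this with $x=\lambda a_i$ gives
\[
\prod_{i\leq N}\cosh(\lambda a_i)\leq \exp\!\left(\frac{\lambda^2}{2}\sum_{i\leq N} a_i^2\right)\leq \exp\!\left(\frac{\lambda^2\sigma^2}{2}\right),
\]
so that $\mathbb P(\eta_N\geq T)\leq \exp\!\left(\tfrac{\lambda^2\sigma^2}{2}-\lambda T\right)$ for every $\lambda>0$.

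The last step is to optimize the exponent in $\lambda$. Setting $\lambda=T/\sigma^2$ minimizes the quadratic $\tfrac{\lambda^2\sigma^2}{2}-\lambda T$ and gives value $-T^2/(2\sigma^2)$, whence
\[
\mathbb P(\eta_N\geq T)\leq \exp\!\left(-\frac{T^2}{2\sigma^2}\right).
\]
Finally, letting $N\to\infty$ and using $\{\eta\geq T\}\subseteq \liminf_N\{\eta_N\geq T-\delta\}$ for each $\delta>0$ (combined with the a.s.\ convergence $\eta_N\to\eta$), followed by $\delta\to 0$, transfers the bound to $\eta$ itself. There is no real obstacle here; the only step that requires a brief justification beyond one-line manipulations is the passage from partial sums to the infinite series, and this is straightforward from $\sum a_i^2\leq \sigma^2$.
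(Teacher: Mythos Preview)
Your proof is correct and follows essentially the same approach as the paper: the Chernoff bound $\mathbb P(\eta\geq T)\leq e^{-tT}\mathbb E e^{t\eta}$, factorization by independence, the elementary inequality $\cosh x\leq e^{x^2/2}$ proved via $(2n)!\geq 2^n n!$, and the optimal choice $t=T/\sigma^2$. The only difference is cosmetic: you first truncate to partial sums $\eta_N$ and pass to the limit at the end, whereas the paper works directly with the infinite product for the moment generating function; your version is slightly more careful about convergence but the substance is identical.
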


\begin{proof}
Consider the moment generating function of $\eta$:
 
 \[
 F(t)=\mathbb Ee^{t\eta}.
 \]
 
 It is easy to see that for any positive $t$ the inequality
 
 \[
 \mathbb P(\eta\geq T)\leq e^{-tT}F(t)
 \]
 
 is true. Let us show that $F(t)\leq \exp(t^2\sigma^2/2)$. Indeed, $\eta$ lies in $L(\sigma^2)$, so for some $a_i$ and $\kappa_i$ we have
 
 \[
 F(t)=\mathbb Ee^{t\sum_i a_i\kappa_i}=\prod_i \mathbb Ee^{ta_i\kappa_i}=\prod_i \left(\frac{e^{-a_i t}+e^{a_it}}{2}\right).
 \]
 The expectation of exponent equals the product of expectations due to independence of $\kappa_i$.
 
 Note now that for any real $t$ the inequality
 
 \[
 \frac{e^{-t}+e^t}{2}=1+\frac{t^2}{2}+\frac{t^4}{4!}+\ldots\leq 1+\frac{t^2}{2}+\frac{\left(\frac{t^2}{2}\right)^2}{2!}+\ldots=e^{t^2/2}
 \]
 
 holds. The upper bound is true due to the inequality $(2n)!\geq 2^n n!=(2n)!!$. Therefore, for any real $t$ we have
 
 \[
 F(t)\leq \prod_i e^{a_i^2t^2/2}\leq e^{\sigma^2t^2/2},
 \]
 
 from this we get
 
 \[
 \mathbb P(\eta\geq T)\leq e^{-tT}e^{\sigma^2t^2/2}.
 \]
 
 Choosing $t=T/\sigma^2$, we prove the desired inequality.
 
\end{proof}

 From this lemma we immediately deduce the bound for probability of negativity of random variables that are close to $\exp(\eta)$ for some $\eta\in L(\sigma^2)$.
 
 \begin{corollary}
 Assume that real random variables $X$ and $Y$ satisfy $X=e^{\eta}$ for some $\eta \in L(\sigma^2)$ and 

\[
\mathbb E(X-Y)^2\leq D.
\]
Then for any $0<u<1$ we have

\[
\mathbb P(Y\leq 0)\leq \exp\left(-\frac{\ln^2 u}{8\sigma^2}\right)+\frac{D}{u}.
\]

\end{corollary}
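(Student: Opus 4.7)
The plan is to split the event $\{Y\leq 0\}$ according to whether $X$ is small or not, choosing the threshold $\sqrt{u}$ rather than $u$ --- this choice is exactly what produces the factor $1/8$ in the exponent and the linear $1/u$ (rather than $1/u^2$) in the second term. I would start from the trivial inequality
\[
\mathbb P(Y\leq 0)\leq \mathbb P(X\leq \sqrt{u})+\mathbb P(X>\sqrt{u},\,Y\leq 0)
\]
and bound the two pieces separately.

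For the first piece, since $X=e^\eta$ the event $\{X\leq \sqrt{u}\}$ coincides with $\{-\eta\geq -\tfrac{1}{2}\ln u\}$, and the threshold $-\tfrac{1}{2}\ln u$ is positive because $0<u<1$. The random variable $-\eta$ still lies in $L(\sigma^2)$, since replacing the defining coefficients $a_i$ by $-a_i$ leaves $\sum_i a_i^2$ unchanged. Applying Lemma 3 to $-\eta$ with $T=-\tfrac{1}{2}\ln u$ gives
\[
\mathbb P(X\leq\sqrt{u})\leq \exp\!\left(-\frac{\ln^2 u}{8\sigma^2}\right),
\]
which is exactly the first summand in the claimed bound.

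For the second piece, on the event $\{X>\sqrt{u},\,Y\leq 0\}$ we have $X-Y>\sqrt{u}$, hence $(X-Y)^2>u$. Markov's inequality combined with the hypothesis $\mathbb E(X-Y)^2\leq D$ yields
\[
\mathbb P(X>\sqrt{u},\,Y\leq 0)\leq \mathbb P\bigl((X-Y)^2>u\bigr)\leq \frac{D}{u}.
\]
Summing the two estimates proves the corollary.

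I do not anticipate any genuine difficulty: once the threshold $\sqrt{u}$ is chosen, both halves reduce to a direct invocation of Lemma 3 and of Markov's inequality. The only point requiring a line of explanation is the invariance $-\eta\in L(\sigma^2)$, which is immediate from the definition of the class. The essential qualitative observation is that, to balance a subgaussian lower tail for $X$ against an $L^2$ deviation bound for $Y-X$, one should compare $X$ to $\sqrt{u}$ so that both errors become commensurate with the target parameter $u$.
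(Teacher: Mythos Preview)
Your proof is correct and follows essentially the same route as the paper: the paper phrases the dichotomy as ``if $Y\le 0$ then either $(X-Y)^2\ge u$ or $X^2\le u$'', which is exactly your split at the threshold $X\le\sqrt{u}$ (since $X>0$). The two pieces are then bounded by Markov's inequality and by the sub-Gaussian tail lemma for $-\eta\in L(\sigma^2)$ with $T=-\tfrac12\ln u$, precisely as you do.
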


\begin{proof}
Indeed, if $Y\leq 0$ then either $(X-Y)^2\geq u$ or $X^2 \leq u$. Probability of the first event is at most $\frac{D}{u}$ due to Markov's inequality. On the other hand, the second event implies that $\eta\leq-\frac{\ln u}{2}$, therefore $-\eta\geq\frac{\ln u}{2}$. Notice now that if $\eta\in L(\sigma^2)$ then the same is true for $-\eta$, as one can choose $-a_i$ instead of $a_i$ in the defining formula. Thus, Lemma 6 implies that the probability of second event is at most $\exp\left(-\frac{\ln^2 u}{8\sigma^2}\right)$. This proves the desired estimate.
\end{proof}
Let us show now that random variables $L(a^{\pm}(1/3))$ are proportional to exponents of certain random variables from the class $L(\sigma^2).$

\begin{lemma}
For $\sigma^2=0.395$ there are $\eta_1$ and $\eta_2$ from $L(\sigma^2)$ such that

\[
L(a^{+}(1/3))=\frac{\pi}{\sqrt{3}}e^{\eta_1}
\]

and

\[
L(a^{-}(1/3))=\frac{\pi}{3}e^{\eta_2}.
\]
\end{lemma}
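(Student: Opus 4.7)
The strategy is to use the explicit identifications from the previous section, $a_n^+(1/3)=\tfrac{\sqrt3}{2}\mlegendre{n}{3}$ and $a_n^-(1/3)=\tfrac32\chi_{0,3}(n)$, which present both random series as random Euler products:
\[
L(a^+(1/3))=\frac{\sqrt3}{2}\prod_{p\neq 3}\left(1-\frac{\mlegendre{p}{3}X_p}{p}\right)^{-1},\qquad L(a^-(1/3))=\frac{3}{2}\prod_{p\neq 3}\left(1-\frac{X_p}{p}\right)^{-1}.
\]
I would take the logarithm of each product and use the identity $-\ln(1-x)=\operatorname{arctanh}(x)-\tfrac12\ln(1-x^2)$ to split the local factor at each prime $p$ into an odd (in $X_p$) and an even (deterministic) piece.

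The key point is that $X_p^2=1$ and $\mlegendre{p}{3}^2=1$ for $p\neq 3$: the even piece becomes the $X_p$-independent constant $-\tfrac12\ln(1-1/p^2)$, while the odd piece equals $\operatorname{arctanh}(1/p)\cdot X_p$ or $\operatorname{arctanh}(1/p)\cdot\mlegendre{p}{3}X_p$, since $\operatorname{arctanh}$ is odd and a $\pm1$ prefactor pulls out of it. Summing over primes I set
\[
\eta_1=\sum_{p\neq 3}\mlegendre{p}{3}\operatorname{arctanh}(1/p)\cdot X_p,\qquad \eta_2=\sum_{p\neq 3}\operatorname{arctanh}(1/p)\cdot X_p.
\]
These are exactly of the form $\sum_i a_i\kappa_i$ required by Definition 4, with $\kappa_p=X_p$ independent Rademacher and coefficients $\pm\operatorname{arctanh}(1/p)$, and both series converge almost surely by Kolmogorov's theorem because $\sum_p\operatorname{arctanh}^2(1/p)<\infty$. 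The deterministic constant $\tfrac12\ln\prod_{p\neq 3}(1-1/p^2)^{-1}$ equals $\tfrac12\ln(4\pi^2/27)$ by $\zeta(2)=\pi^2/6$ and removal of the factor at $p=3$; exponentiating gives $2\pi/(3\sqrt3)$, and multiplying by the outer factors $\sqrt3/2$ and $3/2$ produces the prefactors $\pi/3$ and $\pi/\sqrt3$ asserted in the lemma.

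The only genuinely delicate step is the numerical verification $\sigma^2=\sum_{p\neq 3}\operatorname{arctanh}^2(1/p)\leq 0.395$, which bounds the variance for both $\eta_1$ and $\eta_2$ (the factor $\mlegendre{p}{3}^2=1$ disappears). I would compute the contribution of small primes $p\leq P$ (say $P=23$) explicitly and control the tail via $\operatorname{arctanh}^2(1/p)=1/p^2\bigl(1+O(1/p^2)\bigr)$ together with the standard value $\sum_p 1/p^2\approx 0.4522$. The explicit partial sum gives about $0.386$, the tail about $0.008$, which sits comfortably below the threshold.
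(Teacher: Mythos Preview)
Your argument is correct and essentially identical to the paper's: the identity you use, $-\ln(1-x)=\operatorname{arctanh}(x)-\tfrac12\ln(1-x^2)$, is just the logarithmic form of the paper's factorization $1-\varepsilon/p=\bigl(\tfrac{p-1}{p+1}\bigr)^{\varepsilon/2}(1-1/p^2)^{1/2}$, and your coefficient $\operatorname{arctanh}(1/p)$ is exactly $\tfrac12\ln\tfrac{p+1}{p-1}$. The constant computation via $\zeta(2)$ and the numerical check of $\sum_{p\neq 3}\operatorname{arctanh}^2(1/p)<0.395$ also match.

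One remark: your computation actually yields $L(a^{+}(1/3))=\tfrac{\pi}{3}e^{\eta_1}$ and $L(a^{-}(1/3))=\tfrac{\pi}{\sqrt3}e^{\eta_2}$, with the constants swapped relative to the lemma as stated. This is a typo in the paper's lemma statement (its own proof commits the same swap, and the subsequent proof of Theorem~3 in fact uses the assignment you derived), so your version is the right one; since both $\eta_i$ lie in $L(0.395)$ the lemma's content is unaffected.
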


\begin{proof}
Due to the formulas (\ref{aplus3}) and (\ref{amin3}) we have
 
 \[
 L(a^{+}(1/3))=\frac{3}{2}\sum_{n>0}\frac{X_n\chi_{0,3}(n)}{n}=\frac32\prod_{p\neq 3}\left(1-\frac{X_p}{p}\right)^{-1}
 \]
 
 and
 
 \[
 L(a^{-}(1/3))=\frac{\sqrt{3}}{2}\sum_{n>0}\frac{\mlegendre{n}{3}X_n}{n}=\frac{\sqrt{3}}{2}\prod_{p\neq 3}\left(1-\frac{X_p\mlegendre{p}{3})}{p}\right)^{-1}
 \]
 
Let us observe that for any $\varepsilon=\pm 1$ the identities

\[
\left(1-\frac{\varepsilon}{p}\right)\left(1+\frac{\varepsilon}{p}\right)=1-\frac{1}{p^2}
\]

and

\[
\left(1-\frac{\varepsilon}{p}\right)\left(1+\frac{\varepsilon}{p}\right)^{-1}=\left(\frac{p-1}{p+1}\right)^{\varepsilon}
\]
hold. Multiplying this two equalities and taking a square root, we deduce

\[
1-\frac{\varepsilon}{p}=\left(\frac{p-1}{p+1}\right)^{\varepsilon/2}\left(1-\frac{1}{p^2}\right)^{1/2}.
\]

If we now choose $\varepsilon$ to be $X_p$ and $X_p\mlegendre{p}{3}$, we obtain the relations

\[
L(a^{-}(1/3))=A_1e^{\eta_1}
\]
and
\[
L(a^{+}(1/3))=A_2e^{\eta_2},
\]

where

\[
A_1=\frac32 \prod_{p\neq 3}\left(1-\frac{1}{p^2}\right)^{1/2}=\frac32 \sqrt{\frac{\pi^2}{6}(1-1/9)}=\frac32 \sqrt{\frac{4\pi^2}{27}}=\frac{\pi}{\sqrt{3}},
\]

\[
A_2=\frac{A_1}{\sqrt{3}}=\frac{\pi}{3},
\]

\[
\eta_1=\sum_{p\neq 3}\frac{1}{2}\ln\left(\frac{p-1}{p+1}\right)X_p
\]

and

\[
\eta_2=\sum_{p\neq 3}\frac{1}{2}\ln\left(\frac{p-1}{p+1}\right)\mlegendre{p}{3}X_p.
\]

Direct computation shows that the inequality

\[
\sum_{p\neq 3}\frac{1}{4}\ln^2\left(\frac{p-1}{p+1}\right)<0.395,
\]

is true, which concludes the proof of lemma.
\end{proof}
 Now we are going to show that if $\alpha$ is close to $\frac13$ then random variables $L(a^{\pm}(\alpha))$ and $L(a^{\pm}(1/3))$ are in some sense close to each other. But first let us prove two auxiliary propostitions:

\begin{lemma}
Let $\tau(n)=\sum\limits_{d\mid n}1$ be the divisor function. Then for any complex $s$ with $\mathrm{Re}\,s>1$ the equality

\[
\sum_{n=1}^{+\infty}\frac{\tau(n^2)}{n^s}=\frac{\zeta(s)^3}{\zeta(2s)}
\]

holds.
\end{lemma}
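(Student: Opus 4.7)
The plan is to compare Euler products on both sides. Since $\tau(n^2)$ is multiplicative (as $\tau$ itself is multiplicative and the map $n \mapsto n^2$ preserves coprimality of prime power factors), the Dirichlet series on the left admits an Euler product for $\mathrm{Re}\,s > 1$:
\[
\sum_{n=1}^{+\infty}\frac{\tau(n^2)}{n^s} = \prod_p \sum_{k=0}^{+\infty}\frac{\tau(p^{2k})}{p^{ks}} = \prod_p \sum_{k=0}^{+\infty}\frac{2k+1}{p^{ks}},
\]
using $\tau(p^{2k}) = 2k+1$.

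Next I would evaluate the inner sum in closed form. Writing $x = p^{-s}$ with $|x|<1$, a standard manipulation (differentiate the geometric series) gives
\[
\sum_{k=0}^{+\infty}(2k+1)x^k = \frac{1+x}{(1-x)^2},
\]
so the local factor on the left is $\frac{1+p^{-s}}{(1-p^{-s})^2}$.

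On the other side, the Euler products of $\zeta(s)$ and $\zeta(2s)$ yield the local factor
\[
\frac{(1-p^{-2s})}{(1-p^{-s})^3} = \frac{(1-p^{-s})(1+p^{-s})}{(1-p^{-s})^3} = \frac{1+p^{-s}}{(1-p^{-s})^2},
\]
which matches the local factor computed above. Both Dirichlet series converge absolutely for $\mathrm{Re}\,s>1$ (the coefficient $\tau(n^2)$ grows at most like $n^\varepsilon$), which justifies the Euler product manipulations and gives the identity.

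There is no real obstacle here; the only minor point to be careful about is verifying absolute convergence so that the product over primes really equals the sum, but this is standard because $\tau(n^2) \ll n^\varepsilon$.
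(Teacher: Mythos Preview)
Your proof is correct and follows essentially the same approach as the paper: both expand the Dirichlet series as an Euler product via multiplicativity of $\tau(n^2)$, compute the local factor $\sum_{k\geq 0}(2k+1)p^{-ks}=\frac{1+p^{-s}}{(1-p^{-s})^2}=\frac{1-p^{-2s}}{(1-p^{-s})^3}$, and identify this with the Euler factor of $\zeta(s)^3/\zeta(2s)$. Your version merely adds an explicit justification of absolute convergence.
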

\begin{proof}
Due to multiplicativity of $\tau(n^2),$ we have

\[
\sum_{n=1}^{+\infty}\frac{\tau(n^2)}{n^s}=\prod_p \left(1+3p^{-s}+5p^{-2s}+\ldots\right)=\prod_p\left(\frac{1-p^{-2s}}{(1-p^{-s})^3}\right),
\]

which gives the desired equality.
\end{proof}
Using this lemma, we will prove the following bound for $L^2-$distance between two random variables of certain type.
\begin{lemma}
Let $f:\mathbb R\to \mathbb R$ --- be a function with Lipschitz constant $L$ such that $|f(x)|\leq C$ for any real $x$. Then for any pair of real $\alpha$ and $\beta$ we have

\[
\mathbb E(L(f(\alpha))-L(f(\beta)))^2\leq 92|\alpha-\beta|^{2/3}L^{2/3}C^{4/3},
\]

where for any real $\gamma$ the sequence $f_n(\gamma)$ is defined by formula $f_n(\gamma)=f(n\gamma)$ and $L(f(\gamma))$ then given by Definition 3 for $a_n=f_n(\gamma).$
\end{lemma}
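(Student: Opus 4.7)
The plan is to combine the two elementary bounds on $g_n := f(n\alpha) - f(n\beta)$ into a single weighted geometric mean whose exponents are chosen so that Lemma 8 applies directly. The Lipschitz hypothesis gives $|g_n|\leq Ln|\alpha-\beta|$ and the boundedness gives $|g_n|\leq 2C$; from the elementary inequality $\min(a,b)\leq a^{2/3}b^{1/3}$ valid for $a,b\geq 0$, I would deduce
\[
|g_n|\leq (2C)^{2/3}\bigl(Ln|\alpha-\beta|\bigr)^{1/3}.
\]

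The next step is to expand the second moment. By complete multiplicativity $X_nX_m=X_{nm}$, and since each $X_p$ is an independent Rademacher variable, $\mathbb E(X_nX_m)$ equals $1$ if every prime exponent of $nm$ is even and $0$ otherwise. Therefore
\[
\mathbb E\bigl(L(f(\alpha))-L(f(\beta))\bigr)^2 = \sum_{\substack{n,m\geq 1 \\ nm\text{ is a square}}}\frac{g_n g_m}{nm}.
\]
For each ordered pair $(n,m)$ with $nm=k^2$ the interpolated estimate produces
\[
|g_n g_m|\leq 2^{4/3}C^{4/3}L^{2/3}|\alpha-\beta|^{2/3}(nm)^{1/3} = 2^{4/3}C^{4/3}L^{2/3}|\alpha-\beta|^{2/3}k^{2/3},
\]
and the number of ordered pairs $(n,m)$ with $nm=k^2$ is $\tau(k^2)$, so the second moment is bounded by
\[
2^{4/3}C^{4/3}L^{2/3}|\alpha-\beta|^{2/3}\sum_{k\geq 1}\frac{\tau(k^2)}{k^{4/3}}.
\]
Lemma 8 with $s=4/3$ then evaluates the last series as $\zeta(4/3)^3/\zeta(8/3)$.

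What remains is a numerical check of $2^{4/3}\zeta(4/3)^3/\zeta(8/3)\leq 92$. Truncating each $\zeta$-series after a few terms and using the standard integral tail bound gives $\zeta(4/3)\approx 3.60$ and $\zeta(8/3)\approx 1.28$, so the constant comes out near $91.5$.

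The conceptual point, and essentially the only obstacle, is spotting that $\theta=2/3$ is the correct interpolation exponent. A general parameter $\theta$ yields the series $\sum_k\tau(k^2)/k^{2\theta}$, which converges iff $\theta>\tfrac12$; matching the desired powers of $C$, $L$, and $|\alpha-\beta|$ on the right-hand side of the claim then forces $\theta=2/3$. Once this exponent is chosen, convergence of the Dirichlet series automatically justifies the Fubini step in the orthogonality expansion, and the numerical verification of the constant is routine.
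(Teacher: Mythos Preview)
Your proof is correct and reaches the same constant $2^{4/3}\zeta(4/3)^3/\zeta(8/3)<92$ as the paper, but the route is slightly different. The paper expands the second moment the same way you do, then splits the sum over $d$ with $nm=d^2$ at a threshold $A$: for $d\leq A$ it uses only the Lipschitz bound (each summand $\leq L^2|\alpha-\beta|^2$), for $d>A$ it uses only the sup bound (each summand $\leq 4C^2/d^2$), applies a Rankin-type trick multiplying by $(A/d)^{4/3}$ and $(d/A)^{2/3}$ to force both pieces into $\sum\tau(d^2)/d^{4/3}$, and finally optimises $A=2C/(L|\alpha-\beta|)$. You bypass the splitting and optimisation entirely by interpolating pointwise, $|g_n|\leq(2C)^{2/3}(Ln|\alpha-\beta|)^{1/3}$, which injects the exponent $4/3$ directly. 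Your version is shorter and avoids the auxiliary parameter; the paper's version makes it visible how the exponent $4/3$ emerges from balancing the two regimes, which is perhaps pedagogically clearer if one hasn't already guessed $\theta=2/3$. Either way the computation collapses to Lemma~8 at $s=4/3$ and the same numerical check.
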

\begin{proof}
By the definition of $L(f(\alpha))-L(f(\beta))$, we have

\[
\mathbb E(L(f(\alpha))-L(f(\beta)))^2=\sum_{n,m>0}\frac{(f(n\alpha)-f(n\beta))(f(m\alpha)-f(m\beta))}{nm}\mathbb E(X_nX_m).
\]

Let us notice that $\mathbb E(X_nX_m)=1$ if $nm=d^2$ for some integer $d$ and $0$ otherwise. Splitting the resulting sum according to range of $d$ into $d>A$ and $d\leq A$ parts, we get

\[
\mathbb E(L(f(\alpha))-L(f(\beta)))^2=\sum_{nm=d^2, d\leq A}\frac{(f(n\alpha)-f(n\beta))(f(m\alpha)-f(m\beta))}{nm}+
\]
\[+\sum_{nm=d^2, d>A}\frac{(f(n\alpha)-f(n\beta))(f(m\alpha)-f(m\beta))}{nm}.
\]

Every summand of the first sum is at most $L^2|\alpha-\beta|^2$ due to the Lipschitz property, while in the second sum every summand is bounded by $\frac{4C^2}{d^2}$. Furthermore, any $d$ corresponds to exactly $\tau(d^2)$ summands. From this we obtain the inequality

\[
\mathbb E(L(f(\alpha))-L(f(\beta)))^2\leq |\alpha-\beta|^2L^2\sum_{d\leq A}\tau(d^2)+4C^2\sum_{d>A}\frac{\tau(d^2)}{d^2}.
\]

To get a more convenient estimate, we multiply every summand of the first sum by $(A/d)^{4/3}$ and each summand in the second sum by $(d/A)^{2/3}$. Obviously, both sums will not decrease. Thus,

\[
\sum_{d\leq A}\tau(d^2)\leq A^{4/3}\sum_{d\leq A}\frac{\tau(n^2)}{n^{4/3}}=A^{4/3}s_1(A),
\]

due to Lemma 8. Similarly,

\[
\sum_{d>A}\frac{\tau(d^2)}{d^2}\leq A^{-2/3}\sum_{d>A} \frac{\tau(d^2)}{d^{4/3}}=A^{-2/3}s_2(A).
\]
Observing that $s_1(A)+s_2(A)=\frac{\zeta(4/3)^3}{\zeta(8/3)}$ by lemma 8, we get for all positive $A$
\[
\mathbb E(L(f(\alpha))-L(f(\beta)))^2\leq |\alpha-\beta|^2L^2A^{4/3}s_1(A)+4C^2A^{-2/3}s_2(A)\leq\]
\[\leq\frac{\zeta(4/3)^3}{\zeta(8/3)}\max(|\alpha-\beta|^2L^2A^{4/3},4C^2A^{-2/3})
\]
Choosing $A=2CL^{-1}|\alpha-\beta|^{-1}$ and using relation $\frac{\zeta(4/3)^3}{\zeta(8/3)}2^{4/3}<92,$ we obtain the desired estimate.
\end{proof}

Applying Lemma 9 to the functions $f(x)=\sin(2\pi x)$ and $f(x)=\cos(2\pi x)$ with constants $C=1$ and $L=2\pi$, we obtain the inequality

\begin{equation}
\label{fund}
\mathbb E(L(a^{\pm}(\alpha))-L(a^{\pm}(\beta)))^2\leq 313.3|\alpha-\beta|^{2/3}.
\end{equation}
Now we have enough instruments to prove Theorem 2.

\begin{proof}[of Theorem 2]
Due to Lemma 5 and inequality \ref{fund}, the estimates

\[
\mathbb E(e^{\eta_1}-\frac{\sqrt{3}}{\pi}L(a^{-}(\alpha)))^2=\frac{3}{\pi^2}\mathbb E(L(a^{-}(1/3))-L(a^{-}(\alpha)))^2\leq \frac{926.9}{\pi^2}|\alpha-1/3|^{2/3}\leq 94|\alpha-1/3|^{2/3}
\]

and
\[
\mathbb E(e^{\eta_2}-\frac{3}{\pi}L(a^{+}(\alpha)))^2=\frac{9}{\pi^2}\mathbb E(L(a^{+}(1/3))-L(a^{+}(\alpha)))^2\leq \frac{2780.7}{\pi^2}|\alpha-1/3|^{2/3}\leq 282|\alpha-1/3|^{2/3}
\]
hold.

From this for $|\alpha-1/3|<2\cdot10^{-6}$ we deduce, using Corollary 4, that the probability of negativity of $L(a^{-}(\alpha))$ satisfies for any $0<u<1$ the inequality

\[
\mathbb P(L(a^{-}(\alpha))\leq0)\leq \exp\left(-\frac{\ln^2 u}{3.16}\right)+\frac{0.015}{u}
\]

and similarly

\[
\mathbb P(L(a^{+}(\alpha)\leq0))\leq \exp\left(-\frac{\ln^2 u}{3.16}\right)+\frac{0.0447}{u}.
\]

Computation shows that the minimal values of resulting expressions are attained at the points $u_{-}\approx 0.0756$ and $u_{+}\approx 0.12957$. Values of these expressions in corresponding points are less than or equal to $0.32$ and $0.612$ respectively. From this we obtain the inequality

\[
c(\alpha)\geq 1-\frac{0.32+0.612}{2}=0.534
\]
in the desired range. Therefore, if $\alpha$ lies inside $2\cdot10^{-6}$-neighbourhood of $1/3$, then the sum $L(\alpha,p)$ is nonnegative for at least 53.4\% of all primes, which concludes the proof.
\end{proof}

\section{Conclusion and open problems}

In this paper we were able to reduce Conjecture 1 to the study of one fixed random variable and partially prove the conjecture. Although the progress we made simplifies the original problem, there are still a lot of questions one may ask even beyond the Conjecture 1. For example, is it true, that the $\liminf$ in \ref{goal} can be replaced by $\lim$? If so, what are the properties of this limit as a function of $\alpha$? Also, from section 4 we see that sometimes probability of events $L(a^{-})=0$ or $L(a^{+})=0$ is nonzero: for example, if $X_2=X_3=-1$ then $L(a^{+}(1/6))=L(a^{-}(1/6))=0$. Is it possible to describe all $\alpha$ with $\mathbb P(L(a^{+}(\alpha))=0)+\mathbb P(L(a^{-}(\alpha))=0)>0$ and is this set finite? On the other hand, one might also ask about a computational perspectives of our results. As we use Siegel-Walfisz theorem in the proof of Theorem 5, the proof provides no method to estimate the rate of convergence of resulting distribution. It is probably possible to overcome this obstacle  with the help of Page theorems but we don't know if this can be useful for numerical verification of inequality $c(\alpha)>\frac12.$

\end{document}